\documentclass[11pt,a4paper]{article}
\usepackage[T1]{fontenc}
\usepackage[utf8]{inputenc}
\usepackage{lmodern}
\usepackage[margin=28mm]{geometry}
\usepackage{amsmath,amssymb,amsthm,mathtools,mathrsfs}
\usepackage{microtype}
\usepackage{titling}
\usepackage{booktabs}
\usepackage{flafter}
\usepackage{needspace}
\usepackage{tikz}
\usetikzlibrary{arrows.meta,positioning}
\usepackage{xurl}
\usepackage[colorlinks=true,allcolors=blue!45!black]{hyperref}
\usepackage{bookmark}
\numberwithin{equation}{section}
\newtheorem{theorem}{Theorem}[section]
\newtheorem{proposition}[theorem]{Proposition}
\newtheorem{lemma}[theorem]{Lemma}
\newtheorem{corollary}[theorem]{Corollary}
\theoremstyle{remark}

\DeclareMathOperator{\tr}{tr}
\DeclareMathOperator{\Sym}{Sym}

\newcommand{\RP}{\mathbb{RP}}
\newcommand{\R}{\mathbb R}
\newcommand{\cC}{\mathcal C}

\newcommand{\one}{\mathbf 1}
\newcommand{\ip}[2]{\langle #1,#2\rangle}
\allowdisplaybreaks[2]
\pretitle{\centering\LARGE\bfseries}
\posttitle{\par\vspace{1em}}
\preauthor{\centering\normalsize\begin{tabular}{@{}c@{}}}
\postauthor{\end{tabular}\par\vspace{0.6em}}
\predate{\centering\small}
\postdate{\par}

\title{An orthogonality inequality in \(\RP^{n-1}\)\\
and the double-cap conjecture}
\author{%
Allen Ibiapina\textsuperscript{1}\thanks{Corresponding author:
\href{mailto:allenibiapina@ufc.br}{\nolinkurl{allenibiapina@ufc.br}}.}
\enspace
Carlos André Gomes\textsuperscript{2}
\enspace
Alan Pio\textsuperscript{1}
\enspace
Samuel Belo\textsuperscript{1}
\enspace
Ernandes Ferreira\textsuperscript{1}\\[0.7em]
\footnotesize\textsuperscript{1}\,Departamento de Matemática,
Universidade Federal do Ceará, Fortaleza, Brazil\\[0.25em]
\footnotesize\textsuperscript{2}\,Departamento de Matemática,
Universidade de São Paulo, São Paulo, Brazil
}

\date{19 October 2026}

\hypersetup{
 pdftitle={An orthogonality inequality in RP\string^(n-1)
 and the double-cap conjecture},
 pdfauthor={Allen Ibiapina, Carlos André Gomes, Alan Pio,
 Samuel Belo, Ernandes Ferreira},
 pdfsubject={Sharp orthogonal neighborhood inequality
 in real projective space and the double-cap conjecture}
}
\hypersetup{
 pdftitle={An orthogonality inequality in RP\string^(n-1) and the double-cap conjecture},
 pdfauthor={Allen Ibiapina, Carlos Gomes, Alan Pio, Samuel Belo, Ernandes Ferreira},
 pdfsubject={Sharp orthogonal neighborhood inequality in real projective space and the double-cap conjecture}}
\begin{document}

\maketitle

\begin{abstract}
The orthogonal neighborhood of a set in real projective space is the union
of the hyperplanes perpendicular to its points. We prove that caps
minimize neighborhood measure among measurable sets of fixed measure in \(\RP^{n-1}\), for \(n\ge3\), with rigidity up to null sets.
This yields the double-cap conjecture: a measurable subset of
\(S^{n-1}\) containing no orthogonal pair has measure at most that of
two opposite open caps of radius \(\pi/4\). The argument combines
compactness of the Funk transform, rotation comparisons, and projective
variations. We also obtain qualitative stability and determine the
measurable fractional chromatic number of the spherical orthogonality
graph. AI played an assistive role in preparing this paper. The formalization
and verification workflow for a separate Lean 4 proof of the main inequality
and double-cap result, based on a different argument, were carried out
almost entirely using AI-assisted tools.
\end{abstract}
\par\medskip
\textbf{Keywords:}
spherical geometry, isoperimetric inequality, double-cap conjecture.

\section{Introduction}

The spherical isoperimetric inequality says that caps have the smallest
perimeter among sets of prescribed volume. Symmetrization and
stability of this comparison are central themes in the geometry of the
sphere; see Sperner~\cite{Sperner} and B\"ogelein, Duzaar, and
Fusco~\cite{BogeDuzaarFusco}. We study a related extremal problem
in which perimeter is replaced by the measure of the
\emph{orthogonal neighborhood}: the set of all directions perpendicular
to at least one point of the original set. This quantity measures
how many directions the set excludes if orthogonality is forbidden.

For spherical convex bodies contained in an open hemisphere, Gao,
Hug, and Schneider~\cite{GaoHugSchneider} proved that caps minimize
neighborhood measure at fixed volume. Their result belongs to the theory
of spherical mean width and polar-volume inequalities. This comparison
naturally raises the question of extending the same bound to arbitrary
measurable sets in real projective space. We prove this extension and
characterize the equality cases.

The principal application resolves Witsenhausen's problem~\cite{Witsenhausen}
and the double-cap conjecture~\cite{Kalai}: a measurable subset of a
sphere containing no orthogonal pair has measure at most that of two
opposite open caps of angular radius \(\pi/4\). A set containing no
orthogonal pair is disjoint from its neighborhood, so a lower bound for
neighborhood measure gives an upper bound for the measure of the set.
The full inequality also controls the measures of two different sets
when no point of one is orthogonal to a point of the other.

To state the result, let \(n\ge3\), let \(S^{n-1}\subset\R^n\) carry
surface measure \(\sigma\) normalized to one, and write
\(M=\RP^{n-1}\). A point of \(M\) is a line through the origin;
the quotient map \(\pi:S^{n-1}\to M\) identifies \(x\) with \(-x\).
We write \([x]=\pi(x)\) and give \(M\) its normalized Riemannian
volume \(\mu\), so that \(\mu(A)=\sigma(\pi^{-1}A)\).
The \emph{orthogonal neighborhood} of \(A\subset M\) is
\begin{equation}\label{eq:shadow}
 N(A)=\{y\in M:x\cdot y=0\text{ for some }x\in A\}.
\end{equation}
Projective points in this formula are represented by unit vectors.
Changing the sign of either representative leaves orthogonality
unchanged, so \(N(A)\) is well defined and is the union of the
projective hyperplanes perpendicular to the points of \(A\).
This is the graph neighborhood for adjacency defined by orthogonality.

A projective cap of radius \(r\) and unit axis \(z\) is
\(\cC_r(z)=\{x:|x\cdot z|\ge\cos r\}\). Its spherical lift consists
of two opposite caps, and its neighborhood is the belt
\(\{y:|y\cdot z|\le\sin r\}\). With
\(c_n=2\Gamma(n/2)/(\sqrt\pi\,\Gamma((n-1)/2))\), their measures
and the resulting neighborhood profile are
\begin{align}
 A_n(r)&=c_n\int_0^r\sin^{n-2}\theta\,d\theta,&
 B_n(r)&=c_n\int_0^r\cos^{n-2}\theta\,d\theta,
 \quad 0\le r\le\pi/2,\label{eq:angular}\\
 g_n(p)&=B_n(A_n^{-1}(p)),\quad 0\le p\le1.
 \label{eq:g}
\end{align}
Thus \(g_n(p)\) is the neighborhood measure of a cap of measure \(p\).
The functions \(A_n\) and \(B_n\) are increasing homeomorphisms
onto \([0,1]\), related by
\begin{equation}\label{eq:complement}
 B_n(r)=1-A_n(\pi/2-r).
\end{equation}
All measures are completed, so subsets of null sets are measurable.
We write
\(\mu_*(D)=\sup\{\mu(K):K\subset D,\ K\text{ compact}\}\)
for inner measure; this equals \(\mu(D)\) when \(D\) is measurable.
Using inner measure avoids a measurability assumption on the
pointwise neighborhood of a completed-measurable set.

\begin{theorem}\label{thm:shadow}
Let \(n\ge3\). Every completed-measurable \(A\subset\RP^{n-1}\)
satisfies
\begin{equation}\label{eq:main}
 \mu_*(N(A))\ge g_n(\mu(A)).
\end{equation}
Projective caps attain the bound. If \(0<\mu(A)<1\) and equality
holds, then \(A\) agrees almost everywhere with a projective cap.
\end{theorem}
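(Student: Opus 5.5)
We plan to reduce the inequality to closed sets and then solve a variational problem for a functional that is continuous under weak-\(*\) convergence. Since inner regularity gives \(\mu(A)=\sup\mu(K)\) over compact \(K\subset A\), and \(N(K)\subset N(A)\) is compact (it is the image of the compact incidence set \(\{(x,y):x\cdot y=0,\ x\in K\}\)), it suffices to prove \(\mu(N(K))\ge g_n(\mu(K))\) for compact \(K\). Continuity and monotonicity of \(g_n\) then give \eqref{eq:main} for all completed-measurable \(A\).

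For compact \(K\) we relax the problem. Let \(R\) be the Funk transform, \(Rf(y)=\int_{y^\perp}f\), taken with respect to the normalized measure on the great subsphere. If \(y\notin N(K)\), then \(y^\perp\) misses \(K\), so \(R\one_K(y)=0\). Hence \(\mu(N(K))\ge\mu(\{R\one_K>0\})\). We set \(\Phi(f)=\mu(\{Rf>0\})\) for \(0\le f\le1\) with \(\int f=p\), and we aim to show \(\inf\Phi=g_n(p)\), the infimum being attained only at caps.

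\(R\) is a compact operator \(L^2\to L^2\), indeed a smoothing operator of order \((n-2)/2\), and this is where \(n\ge3\) enters. Therefore weak-\(*\) limits of minimizing sequences \(f_k\) satisfy \(Rf_k\to Rf\) strongly. Lower semicontinuity of \(\mu(\{Rf>0\})\) under uniform convergence of continuous functions (open sets) yields a minimizer \(f\). Rotation comparisons then show the minimizer is a cap: we use two-point or cap symmetrization with respect to hyperplanes through the origin. Such symmetrization commutes suitably with \(R\), since reflections permute great subspheres, and it does not increase \(\mu(\{Rf>0\})\). This would show the minimizer is zonal and monotone in \(|x\cdot z|\), hence a cap-like profile. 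For a zonal decreasing profile, the support of \(Rf\) is a belt \(\{|y\cdot z|\le\sin r\}\), where \(r\) is the essential radius of the support of \(f\). Given the mass constraint, \(r\ge A_n^{-1}(p)\), with equality iff \(f=\one_{\cC_r}\). This gives the bound, and caps attain it by the computation preceding the theorem.

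The main obstacle is rigidity, together with the claim that symmetrization does not increase the support of \(Rf\). Two-point symmetrization preserves \(\int_{y^\perp}f\) only after pairing \(y^\perp\) with its reflection, so one must check that the positivity sets rearrange correctly. For equality with \(0<p<1\) we would use projective variations. Perturb \(A\) by moving mass along a one-parameter family of projective transformations \(x\mapsto Tx/|Tx|\) with \(T\) near the identity. Compute the first variation of \(\mu(N(A_t))-g_n(\mu(A_t))\), and show that nonnegativity of this variation forces every two-point symmetrization to fix \(A\) up to null sets. Sets fixed by all such symmetrizations about hyperplanes through a common axis are caps a.e. This characterization of symmetrization-invariant sets, together with the strictness in the symmetrization step, is the step we expect to be hardest. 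We would first attempt it via the Baernstein–Taylor equality case for two-point rearrangement.
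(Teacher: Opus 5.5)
There is a genuine gap, and it sits at the step that carries all the content: the claim that two-point (or cap) symmetrization does not increase \(\mu(\{Rf>0\})\). As stated, this is false.

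Let \(E=C\cup(-C)\) be the double cap of radius \(r\in(0,\pi/2)\) about \(z\). Polarize across \(h^\perp\), with positive side \(\{x\cdot h>0\}\), where \(h\ne z\) and \(\angle(z,h)<\pi/2-r\). Let \(\rho\) be the reflection. Then \(E^\sharp\cap\{x\cdot h>0\}=C\cup\rho(-C)\), which consists of two caps of radius \(r\) about the distinct axes \([z]\) and \([\rho z]\). Hence \(\{R\one_{E^\sharp}>0\}\) contains the union of the open belts \(\{|y\cdot z|<\sin r\}\) and \(\{|y\cdot\rho z|<\sin r\}\). Its measure strictly exceeds \(B_n(r)=\mu(\{R\one_E>0\})\), although the mass is unchanged.

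Likewise, symmetric decreasing rearrangement on the sphere sends a double cap of measure \(0<p\le1/2\) to a single cap, whose neighborhood has measure \(g_n(2p)>g_n(p)\).

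The underlying problem is that spherical rearrangements do not respect antipodal symmetry, while the extremal sets are double caps. You also cannot move the construction to \(\RP^{n-1}\). There a projective hyperplane does not separate the space (its complement is a connected cell), so there is no half into which to push mass. More fundamentally, orthogonality is not monotone in \(x\cdot y\). So neither the Baernstein--Taylor inequality nor its equality cases, on which your rigidity step relies, apply. The paper names exactly this obstruction in its introduction.

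The rigidity plan has a second gap. The first variation of \(\mu(N(A_t))-g_n(\mu(A_t))\) under projective deformations yields only a single matrix identity; in the paper's notation it is \(G_B=f'(p)G_A\) for an optimal avoiding pair. One identity of this kind cannot encode invariance under all two-point symmetrizations.

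The paper does not rearrange at all. Its route is:
\begin{itemize}
\item it proves concavity of the profile \(f\) by submodularity and rotation comparison;
\item it takes second variations of \(b(T)+f(a(T))\le1\) under the paired maps \((TA,T^{-T}B)\);
\item it pushes the resulting operator inequality through the positive contraction \(\Psi\) to get \(-f''G_A^2+(n-2)f'G_A\succeq0\);
\item it shows \(G_A\ne0\) at second-order points;
\item it bounds the axial second moment by the bathtub principle, giving \(f''/f'\le g_n''/g_n'\).
\end{itemize}
Integrating this yields \(f=g_n\), and equality in the moment bound gives rigidity.

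Two smaller repairs are also needed.
\begin{itemize}
\item \(Rf_k\to Rf\) only in \(L^2\), and \(Rf\) need not have a continuous representative for bounded \(f\). Lower semicontinuity should therefore come from a.e.\ convergence along a subsequence and Fatou's lemma.
\item For rigidity you need a full-measure Borel subset of \(A\), not compact subsets of smaller measure.
\end{itemize}
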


The geometric meaning of the convex comparison is particularly
transparent in dimension two. On \(\RP^2\), where
\(g_3(p)=\sqrt{p(2-p)}\), polarity identifies neighborhood measure
with perimeter for convex sets, and spherical isoperimetry gives
the comparison.
Let \(K\subset S^2\) be a spherical convex body with nonempty interior,
contained in an open hemisphere. Write \(|K|\) for its unnormalized
area and \(L(K)\) for its perimeter. Its positive polar \(K^\circ\)
consists of the points \(y\in S^2\) satisfying \(x\cdot y\ge0\)
for every \(x\in K\). Spherical isoperimetry and the area--perimeter
identities for polar bodies give
\begin{align}
 L(K)^2&\ge |K|(4\pi-|K|),\label{eq:spherical-isoperimetry}\\
 |K|+L(K^\circ)&=2\pi,\qquad
 L(K)+|K^\circ|=2\pi.\label{eq:polar-identity}
\end{align}
The first inequality is sharp precisely for caps \cite{BogeDuzaarFusco}.
The polar identities follow from Gauss--Bonnet, first for smooth strictly
convex bodies and then by approximation; see \cite{NagyVigh}.

For \(A=\pi(K)\), projection preserves area, so \(|A|=|K|\).
A great circle misses the connected set \(K\) exactly when its defining
inner product has a constant strict sign on \(K\). The complement of
\(N(A)\) is therefore \(\pi(\operatorname{int}K^\circ)\). Since
\(|\RP^2|=2\pi\), \eqref{eq:polar-identity} gives
\(|N(A)|=2\pi-|K^\circ|=L(K)\), and substitution into
\eqref{eq:spherical-isoperimetry} yields
\begin{equation}\label{eq:planar-shadow}
 |N(A)|^2\ge |A|(4\pi-|A|).
\end{equation}
Dividing by \((2\pi)^2\) recovers the profile \(g_3\).
Neighborhood measure thus takes the place of perimeter in an inequality
that can be stated without convexity or boundary regularity.

The convex comparison also connects with Urysohn's inequality. The mean width of a Euclidean convex body is the average,
over directions, of the distance between its parallel supporting
hyperplanes; at fixed volume, balls minimize this average. Together
with inequalities for polar bodies, this is part of the
Brunn--Minkowski theory; see Schneider~\cite{Schneider}. On the
sphere, the probability that a uniformly random great hypersphere
meets a convex body plays the corresponding role. For
\(K\subset S^{n-1}\) contained in an open hemisphere, this probability
is \(\mu(N(\pi(K)))\), or \(2U_1(K)\) in the normalization of
Gao, Hug, and Schneider~\cite{GaoHugSchneider}. Their inequality
\(U_1(K)\ge U_1(C)\), for a cap \(C\) of equal volume, is therefore
a spherical Urysohn inequality. Through polarity it also says that caps
maximize polar volume at fixed volume, a spherical Blaschke--Santal\'o
inequality. Theorem~\ref{thm:shadow} extends the hitting comparison
beyond the class of convex bodies.

The interpretation by random hyperspheres also places the inequality
in stochastic geometry. Hack and Pivovarov~\cite{HackPivovarov}
proved that, among compact sampling sets of equal positive volume
in an open hemisphere, caps minimize the expected hitting functional
of the convex hull of a fixed number of independent uniform points.
Hug and Reichenbacher~\cite{HugReichenbacher} obtained quantitative
stability estimates for the convex hitting inequality and applied them
to cells of random spherical tessellations. Such stability results
ask whether a set whose functional is nearly minimal must be close
to a cap. This question is also central to quantitative spherical
isoperimetry~\cite{BogeDuzaarFusco}. We obtain qualitative stability
for arbitrary measurable projective sets: at fixed measure, convergence
to the sharp neighborhood bound forces convergence in measure, after rotations,
to a cap.

Extending the comparison to arbitrary measurable sets requires more
than convexification. A great hypersphere may separate two components
without meeting either, while meeting their convex hull; taking the
hull can therefore increase the neighborhood measure being minimized.

Spherical rearrangement inequalities provide another natural comparison
with caps.
For a bounded nonnegative kernel that is nondecreasing in the inner
product, the Baernstein--Taylor inequality \cite{BaernsteinTaylor}
maximizes the integral over two sets by replacing them with caps of the
same respective measures and a common center. Burchard and
Hajaiej~\cite{BurchardHajaiej} extended this theory to certain
nonlinear integral functionals and analyzed their equality cases. These comparisons depend on monotonicity of the kernels
in spherical distance. Orthogonality instead singles out inner product
zero: the indicators of thin belts \(|x\cdot y|<\varepsilon\)
are not monotone in \(x\cdot y\). Thus these rearrangement theorems
do not directly supply the neighborhood inequality.

The extension to measurable sets addresses Witsenhausen's 1974 problem
of maximizing the measure of a spherical set containing no orthogonal
pair \cite{Witsenhausen}. Write
\begin{equation}\label{eq:alpha}
 \alpha_n=\sup\{\sigma(E):E\subset S^{n-1}\text{ measurable},
                 \ x\cdot y\ne0\text{ for all }x,y\in E\}.
\end{equation}
We call such sets \emph{independent}. Witsenhausen proved
\(\alpha_n\le1/n\): an orthonormal frame meets an independent set
in at most one point, and averaging over rotations gives
\(n\sigma(E)\le1\). Two opposite open caps of angular radius
\(\pi/4\) supply a lower bound. Inner products are positive within
either cap and negative between the two, so their union is independent
and has measure
\begin{equation}\label{eq:delta}
 \delta_n=A_n(\pi/4)
          =c_n\int_0^{\pi/4}\sin^{n-2}\theta\,d\theta.
\end{equation}
The double-cap conjecture, attributed to Kalai, asserts that
\(\alpha_n=\delta_n\); see \cite{Kalai}.

Witsenhausen's averaging argument illustrates a general bridge between
finite configurations and measurable sets. For any finite configuration
on the sphere, averaging over its rotations bounds \(\alpha_n\) by
the largest proportion of its points that can contain no orthogonal
pair. The intersection theorem of Frankl and Wilson~\cite{FranklWilson}
provides configurations that give exponentially decreasing upper bounds
in high dimensions. Raigorodskii~\cite{Raigorodskii} improved these
bounds using vectors with coordinates in \(\{-1,0,1\}\). The
polynomial method behind these constructions and its relation to Borsuk's
problem are discussed in Kalai's survey~\cite{Kalai}.

A complementary approach uses harmonic analysis. For a measurable
set \(E\) with indicator \(\one_E\), averaging
\(\one_E(Rx)\one_E(Ry)\) over rotations \(R\) produces a continuous
positive definite kernel depending only on \(x\cdot y\).
Schoenberg~\cite{Schoenberg} characterized continuous kernels of this
kind by expansions in Gegenbauer polynomials with nonnegative
coefficients. This positivity is the basis of the linear programming
method of Delsarte, Goethals, and Seidel~\cite{DelsarteGoethalsSeidel}
for spherical codes: finite configurations with prescribed restrictions
on their pairwise angles. For measurable sets avoiding
an angle, the same method turns the forbidden inner product into a
constraint on the correlation kernel. DeCorte and
Pikhurko~\cite{DeCortePikhurko} combined this constraint with
combinatorial restrictions to improve Witsenhausen's bound on \(S^2\)
from \(1/3\) to \(\alpha_3<0.313\).

Stronger optimization methods retain more of the restrictions imposed
by an actual set. In finite graph theory, Lov\'asz's theta
function~\cite{Lovasz} gives a semidefinite bound for independence.
For spherical codes, Bachoc and Vallentin~\cite{BachocVallentin}
strengthened linear programming by using matrix positivity conditions
associated with triples of points. In the measurable setting, DeCorte,
de Oliveira Filho, and Vallentin~\cite{DeCorteOliveiraVallentin}
gave an exact convex optimization formulation using completely positive
kernels, a stronger condition than positive definiteness.
Bekker et al.~\cite{BekkerEtAl} developed optimization hierarchies
for distance-avoiding sets in compact spaces. On \(S^2\), where
\(\delta_3=1-1/\sqrt2=0.292893\ldots\), these approaches have
progressively narrowed the gap above the cap construction. The May 2026
preprint of Czifra et al.~\cite{CzifraEtAl} reports the bound
\(\alpha_3\le0.2953\), combining harmonic analysis with bounds from
finite graphs.

Theorem~\ref{thm:shadow} gives the conjectured value directly. If
\(E\) is independent, its antipodal saturation \(E\cup(-E)\) is
still independent. Its projective image \(A\) has measure
\(p=\mu(A)\ge\sigma(E)\) and is disjoint from \(N(A)\), whence
\[
 p+g_n(p)\le1.
\]
Since \(p\mapsto p+g_n(p)\) is strictly increasing and
\(\delta_n+g_n(\delta_n)=1\) by \eqref{eq:complement}, we obtain
\(\sigma(E)\le\delta_n\). Opposite open caps attain this bound.

The same extremal measure governs a fractional coloring parameter.
The \emph{spherical orthogonality graph} \(\mathcal G_n\) has vertex
set \(S^{n-1}\), with distinct vertices adjacent exactly when they
are orthogonal. Its measurable independent sets are precisely those in
\eqref{eq:alpha}. Identifying antipodal vertices gives the
\emph{projective orthogonality graph} \(\mathcal P_n\) on \(M\),
where \([x]\) and \([y]\) are adjacent when \(x\cdot y=0\).
In this language, \(N(A)\) is the set of all neighbors of points of
\(A\).

The measurable chromatic number \(\chi_{\mathrm m}(G)\) is the least
number of measurable independent sets in a partition of the vertices of
\(G\). A measurable fractional coloring allows a finite family of such
sets to overlap, assigning nonnegative weights whose sum at each vertex
is at least one. The infimum of the total weight is the measurable
fractional chromatic number \(\chi_{f,\mathrm m}(G)\). We obtain
\[
 \chi_{f,\mathrm m}(\mathcal G_n)=\alpha_n^{-1}=\delta_n^{-1},
\]
which equals \(2+\sqrt2\) for \(S^2\). For finite transitive graphs,
this reciprocal relation is classical~\cite{ScheinermanUllman};
Proposition~\ref{prop:homogeneous} gives the measurable version for a
class of compact homogeneous graphs.

Measurable colorings also connect the problem with Euclidean distance
geometry. Falconer~\cite{Falconer} studied partitions of Euclidean
space into measurable sets avoiding a fixed distance, using measure
to bound the number of colors. Bachoc, Nebe, de Oliveira Filho, and
Vallentin~\cite{BachocEtAl} developed harmonic bounds for measurable
chromatic numbers in a framework that includes the sphere.
For \(\mathcal G_n\), combining our independence bound with classical
spherical coverings \cite{RogersCovering,Prosanov} determines the
exponential growth rate:
\(\chi_{\mathrm m}(\mathcal G_n)=(\sqrt2+o(1))^n\)
(Corollary~\ref{cor:colorings}).

Orthogonality also gives the problem a connection to quantum theory.
Orthogonal state vectors represent states that a suitable measurement
can perfectly distinguish. The Kochen--Specker theorem~\cite{KochenSpecker}
shows that, already in dimension three, it is impossible to assign
values in \(\{0,1\}\) to all rays (one-dimensional subspaces) so
that each orthonormal basis
contains exactly one ray assigned the value \(1\). Such an assignment
would select a predetermined outcome in every basis, with the value of
a ray independent of the basis in which it is measured. A more direct connection to the
extremal measure problem was established by Montina~\cite{Montina},
who used the double-cap conjecture to obtain conditional lower bounds
on the classical communication needed to simulate quantum state
transmission and measurement.

The proof determines the optimal neighborhood profile, which assigns
to each \(p\) the least possible neighborhood measure of a set of
measure \(p\). This optimization can be expressed in terms of pairs
of sets with no orthogonal pair between them. The Funk
transform~\cite{Funk}, which averages functions over great hyperspheres,
records the incidence between the two sets. We use its compactness to
obtain optimal pairs, adapting the method of DeCorte and
Pikhurko~\cite{DeCortePikhurko} for the existence of maximum-measure
angle-avoiding sets when \(n\ge3\).

To identify the profile, we follow the variational comparison approach
used by Morgan and Johnson~\cite{MorganJohnson} and Bayle and
Rosales~\cite{BayleRosales} for isoperimetric profiles. Rotation
comparisons first show that the optimal profile is concave. Paired
projective transformations then allow us to deform the optimal pairs
while preserving their orthogonality exclusion. A key step is to show
that, at almost every prescribed measure, an optimizer admits a
deformation that changes its measure to first order. This allows us
to extract a differential inequality for the profile from the second
variations. Integrating the comparison with the cap equation identifies
the profile, and the equality conditions force the extremizers to agree
almost everywhere with caps.

\paragraph{Separate Lean proof.}
The \emph{LeanShadow} repository contains a machine-checked Lean 4
proof of the sharp neighborhood inequality and the double-cap result, including
their rigidity statements (Theorem~\ref{thm:shadow} and
Corollary~\ref{cor:double}):
\begin{center}
\url{https://github.com/allenibiapina/LeanShadow}
\end{center}
This development uses a different proof from the one presented in this
paper. The formalization and verification workflow were carried out almost
entirely using AI-assisted tools. Its formal verification covers the
statements and arguments in the repository; it does not constitute a
verification of the proof or the additional applications presented here.
The repository gives the exact formal statements and instructions for
reproducing the checks.

\section{Preliminaries and construction of the proof}

\subsection{Measure, incidence, and the optimal profile}\label{sec:measure-profile}
Throughout the proof \(n\ge3\) is fixed. A Borel set belongs to the
sigma-algebra generated by the open sets. The neighborhood of a Borel set
\(A\) is the projection onto the second coordinate of
\(\{(x,y)\in M\times M:x\in A,\ x\perp y\}\).
Projections of Borel sets in a product of compact metric spaces are
measurable for every completed Borel probability measure \cite{Kechris}.
Thus \(N(A)\) is \(\mu\)-measurable. If \(A\) is compact, so is \(N(A)\).
For measurable \(D\), its density-one representative is the Borel set
\[
 D^*=\left\{x:\lim_{r\downarrow0}
             \frac{\mu(D\cap B_r(x))}{\mu(B_r(x))}=1\right\},
\]
where \(B_r(x)\) is the projective metric ball. The Lebesgue density
theorem gives \(\mu(D\triangle D^*)=0\); here \(\triangle\) denotes
symmetric difference. This is the indicator-function case of the Lebesgue
differentiation theorem \cite{EvansGariepy}, transferred to Riemannian
volume in local coordinates. A density-zero point of \(D\) is a
density-one point of its complement.
Changing a set on a null subset can nevertheless change its neighborhood.
For example, a projective hyperplane \(H=z^\perp\) has measure zero,
but \(N(H)=M\) when \(n\ge3\): every \(y^\perp\) meets \(z^\perp\).
This is why pointwise avoidance and the choice of representatives
must remain explicit.

Two sets \(A,B\subset M\) form an \emph{avoiding pair}, or
\emph{avoid one another}, if \(x\cdot y\ne0\) for every
\(x\in A\) and \(y\in B\). Equivalently,
\(B\cap N(A)=\varnothing\). This condition places no restriction
on pairs within either set. An independent set \(A\) is exactly
a set for which \((A,A)\) is an avoiding pair.

Define
\begin{equation}\label{eq:f}
 f(p)=\inf\{\mu(N(A)): A\subset M\text{ Borel},\ \mu(A)=p\}.
\end{equation}
For a fixed \(A\), the largest set avoiding it is \(M\setminus N(A)\).
Thus \(1-f(p)\) is the supremal measure of a set avoiding some set
of measure \(p\). This reformulates the minimization problem as an
optimization over two sets. A Borel set \(A\) of measure \(p\)
with \(\mu(N(A))=f(p)\) is an \emph{optimizer at measure \(p\)}.
If a Borel set \(B\) avoids this \(A\) and has measure \(1-f(p)\),
we call \((A,B)\) an \emph{optimal avoiding pair at measure \(p\)}.
Proposition~\ref{prop:attainment} proves that such pairs exist.
The aim is to identify \(f=g_n\) and classify its optimizers
up to null sets.

Write \(\one_E\) for the indicator of \(E\), and use the real
\(L^2\) inner product \(\ip{h}{k}=\int_M hk\,d\mu\).
The probability-normalized Funk transform is
\begin{equation}\label{eq:F}
 Fh(x)=\int_{x^\perp}h(y)\,d\mu_{x^\perp}(y),
\end{equation}
where the fiber \(x^\perp\cong\RP^{n-2}\) carries normalized volume.
For an indicator, \(F\one_E(x)\) is the fraction of directions
perpendicular to \(x\) that belong to \(E\).
Let \(SO(n)\) be the rotation group, equipped with its invariant
probability measure, called normalized Haar measure and denoted \(dR\).
The \emph{orthogonality incidence measure} \(\nu\) is the law
of \(([Re_1],[Re_2])\) for a rotation sampled from this measure.
Equivalently, first sample \(x\) uniformly in \(M\)
and then \(y\) uniformly in \(x^\perp\). Its marginals are \(\mu\),
it is symmetric under exchange of coordinates, and
\(\ip{h}{Fk}=\int h(x)k(y)\,d\nu(x,y)\).
Consequently \(F\) extends to a self-adjoint operator on \(L^2(M)\)
with \(\|Fh\|_2\le\|h\|_2\). It preserves nonnegative functions, with \(F1=1\) and
\(\int Fh=\int h\).
These statements concern equivalence classes modulo null sets;
individual fiber integrals can depend on the representative.
The \emph{incidence} between measurable sets \(D,E\) is
\(\nu(D\times E)=\ip{\one_D}{F\one_E}\); zero incidence means
that this quantity vanishes. Avoiding sets have zero incidence.
Lemma~\ref{lem:compact} recovers pointwise avoidance from zero
incidence by passing to density-one representatives.

For any fixed unit \(z\), the coordinate \(t=|x\cdot z|\) has density
\begin{equation}\label{eq:latitude}
 w_n(t)=c_n(1-t^2)^{(n-3)/2},\quad 0<t<1.
\end{equation}
Every latitude is null, so the measure of
\(E\cap\{|x\cdot z|\le t\}\) varies continuously from \(0\) to
\(\mu(E)\) as \(t\) runs from \(0\) to \(1\). The intermediate value
theorem therefore trims a Borel set to any smaller prescribed measure.
We shall use this exact trimming repeatedly.

\subsection{Deformations, moments, and concavity}
Paired projective deformations let us vary the measures of fixed
sets while preserving avoidance. Let \(SL(n,\R)\) denote the real
matrices of determinant one. For \(T\in SL(n,\R)\), the notation
\(TE\) means the image under \(x\mapsto Tx/\|Tx\|\), and
\(T^{-T}=(T^{-1})^T\). The paired transformation
\((A,B)\mapsto(TA,T^{-T}B)\) preserves avoidance because
\((Tx)\cdot(T^{-T}y)=x\cdot y\); normalization does not change
whether this inner product vanishes.

For real symmetric matrices \(H,K\) of the same size, we use the
Frobenius inner product \(\ip{H}{K}=\tr(HK)\) and norm
\(\|H\|_F=\sqrt{\ip{H}{H}}\). The symmetric deformation directions
form the space \(V=\Sym_0(n)\) of symmetric trace-free
\(n\times n\) matrices, of dimension \(d=(n-1)(n+2)/2\).
For a self-adjoint operator \(Q\) on \(V\), \(Q\succeq0\) means
\(\ip{QH}{H}\ge0\) for every \(H\in V\); for a symmetric
\(n\times n\) matrix, it means \(z^TQz\ge0\) for every \(z\in\R^n\).
\(Q\preceq R\) means \(R-Q\succeq0\). We write \(\tr_V\) for the
trace of an operator acting on \(V\).
For a measurable set \(E\), put
\begin{equation}\label{eq:moments}
 S_E=\int_E xx^T\,d\mu(x),\qquad G_E=\mu(E)I-nS_E\in V.
\end{equation}
The condition \(G_E=0\) is called isotropy. It means that all
directional second moments are equal. The matrix \(G_E\) will also
represent the first variation of the measure under projective
deformations.

For the scalar comparison, we use the regularity of concave functions.
A continuous concave function \(f:[0,1]\to\R\) has a right derivative
\(v=f'_+\) that is right-continuous, nonincreasing, and locally bounded
in \((0,1)\). The function \(f\) is locally absolutely continuous,
and \(v\) is differentiable almost everywhere. We write \(f''=v'\)
where this derivative exists. Integrating the first-order expansion of
\(v\) gives the second-order Peano expansion at almost every interior \(p\):
\begin{equation}\label{eq:Peano}
 f(p+h)=f(p)+f'(p)h+\tfrac12 f''(p)h^2+o(h^2),
 \qquad f''(p)\le0.
\end{equation}
We call such \(p\) \emph{second-order points}. This expansion at a single
point does not assert that \(f\) is \(C^2\); see \cite{EvansGariepy}.

The right derivative need not itself be absolutely continuous.
To integrate the differential comparison, we will use the following
inequality for monotone functions \cite[Theorem 4.9]{Ryzhik}:
if \(w\) is right-continuous and nonincreasing on an open interval, then
\begin{equation}\label{eq:monotone-integral}
 w(t)-w(s)\le\int_s^t w'(x)\,dx
 \qquad(s<t)
\end{equation}
on every compact subinterval. Indeed, Fatou's lemma applied to
the nonnegative quotients \((w(x)-w(x+h))/h\), \(h\downarrow0\),
gives \(\int_s^t(-w')\le w(s)-w(t)\).
The inequality accounts for both jumps and singular continuous decreases.

\subsection{Proof outline}
Caps give \(f\le g_n\); we need the reverse inequality and the
classification of optimizers. Concavity of \(f\), which we establish
first, gives only the chord bound \(f(p)\ge p\) from the endpoint
values. The main idea is to prove concavity
of \(\Phi=f\circ g_n^{-1}\). In the coordinate \(s=g_n(p)\), the
cap profile becomes the line \(s\mapsto s\). Since both profiles
have endpoint values \(0\) and \(1\), concavity places \(\Phi\)
above this line, giving \(f\ge g_n\). The proof develops the
variations needed for this comparison as follows.

\emph{Attainment (\S~\ref{sec:attainment}).}
Compactness of the Funk transform preserves zero incidence under
weak limits. Although the limits need not be indicators, their positive
supports have enough measure to recover an optimal pair. Passing to
density-one representatives restores pointwise avoidance, and trimming
the sets gives the prescribed measures. These fixed sets are the starting
point for the later variations.

\emph{Cap comparison and concavity (\S~\ref{sec:concavity}).}
We compute cap neighborhoods and establish \(p\le f(p)\le g_n(p)\),
with endpoint values \(f(0)=0\) and \(f(1)=1\). Adding small pieces
of rotated caps proves continuity of \(f\). We then compare a set
with a rotated copy. Their union and intersection have measures
\(p+h\) and \(p-h\), while their neighborhood measures sum to
at most twice the original neighborhood measure. Rotations realize
every sufficiently small \(h\), giving the midpoint inequality
and hence concavity of \(f\). Together with the cap bound, it gives a positive,
nonincreasing right derivative \(f'_+\) and second-order expansions
almost everywhere. These are the regularity properties needed to
formulate and integrate the differential comparison.

\emph{Projective variations (\S~\ref{sec:variations}).}
For an optimal pair \((A,B)\), deform the sets to
\((TA,T^{-T}B)\) and write \(a(T)=\mu(TA)\),
\(b(T)=\mu(T^{-T}B)\). Avoidance is preserved, so
\[
 b(T)+f(a(T))\le1,
 \qquad\text{with equality at }T=I.
\]
The projective Jacobian makes \(a\) and \(b\) analytic without
boundary assumptions on the sets. We compute their derivatives and
establish the rotation averages and nonconstancy used below. Each
variation starts from one fixed optimal pair; the transformed sets
need not remain optimal. They provide competitors at the new measure
\(a(T)\), which is why \(f(a(T))\) enters the comparison. The measure
functions are analytic in \(T\), whereas the profile needs only its
second-order expansion at the original measure \(p\).

\emph{Contraction identity (\S~\ref{sec:contraction}).}
The first measure derivatives are represented by the moment matrices
\(G_E\). We construct a linear map \(\Psi\) that preserves
positive semidefiniteness and sends each operator representing second
measure derivatives to a multiple of \(G_E\). At a second-order point,
the optimality expression has a maximum at \(I\), so the operator
representing its negative second variation is positive semidefinite.
Applying \(\Psi\) preserves this positivity and expresses the resulting
inequality in terms of \(G_A\) and \(G_B\). The first-order optimality
relation then eliminates \(G_B\). The same contraction identity is
also used to exclude isotropic optimizers.

\emph{Nonisotropy (\S~\ref{sec:nonisotropy}).}
At every second-order point of \(f\), we prove that every optimizer
satisfies \(G_A\ne0\). Thus some projective deformation changes
its measure to first order. This is necessary because otherwise the
term involving \(f''\) in the second variation would vanish.
Suppose \(G_A=0\). Analyticity and nonconstancy give a first nonzero
Taylor term of degree \(m\ge2\) in the measure variation. Optimality
gives an upper bound of order \(2m\) for the signed displacement
from the tangent line to \(b=1-f(a)\) at the original pair. This
displacement has zero average over deformations conjugated by rotations,
so its terms of degree below \(2m\) vanish. Along a direction where
the degree-\(m\) term is nonzero, the contraction identity bounds
the measure derivative by order \(2m-2\). Integration gives order
\(2m-1>m\), a contradiction.

\emph{Differential comparison (\S~\ref{sec:differential}).}
Applying \(\Psi\) to second-order optimality gives a matrix
inequality involving \(G_A\). Since \(G_A\ne0\) is trace-free,
it has a negative eigenvalue, which yields a scalar bound on
\(f''/f'\). Among sets of prescribed measure, a cap maximizes
the second moment in its axial direction. For a fixed unit axis \(z\),
this follows by placing the prescribed measure where \((x\cdot z)^2\)
is largest. The level sets of this function have measure zero, so equality
determines the cap up to a null set. This sharp moment bound makes the
scalar estimate precisely
\[
 \frac{f''(p)}{f'(p)}\le\frac{g_n''(p)}{g_n'(p)}
 \qquad\text{at every second-order point }p.
\]

\emph{Identification of the profile and rigidity (\S~\ref{sec:comparison}).}
Monotonicity of \(f'_+\) lets us integrate the differential
comparison, including possible jumps and singular decreases, to show
that \(f'_+/g_n'\) is nonincreasing. In the coordinate
\(s=g_n(p)\), this ratio is the right derivative of \(\Phi\),
so \(\Phi\) is concave and the endpoint argument gives \(f=g_n\).
The differential comparison is initially available only almost everywhere.
Once the profiles agree, the smoothness of \(g_n\) makes every interior
measure a second-order point of \(f\). We can therefore return to the
variational inequalities for any optimizer, and equality in the moment
bound forces it to agree almost everywhere with a cap. Finally, for a
completed-measurable set, choose a Borel subset of full measure. Its
neighborhood is contained in the original neighborhood, so monotonicity
of inner measure transfers the lower bound. If equality holds, the Borel
subset is itself an optimizer, and its rigidity transfers to the original
set.

Figure~\ref{fig:proof} follows the central deduction. The arrow labels
indicate the inputs needed to pass from one conclusion to the next.

\begin{figure}[!t]
\centering
\begin{tikzpicture}[
 box/.style={draw=black!65,rounded corners=2pt,align=center,
             font=\small,text width=110mm,inner sep=6pt},
 reason/.style={font=\footnotesize,align=left,fill=white,inner sep=2pt},
 arr/.style={-{Stealth[length=2mm]},thick},
 node distance=18mm]
 \node[box] (optimality) {\textbf{Optimality under projective deformations}\\[2pt]
   \(b(T)+f(a(T))\le1\), with equality at \(I\)};
 \node[box,below=of optimality] (matrix) {\textbf{A matrix inequality}\\[2pt]
   \(-f''(p)G_A^2+(n-2)f'(p)G_A\succeq0\)};
 \node[box,below=of matrix] (differential) {\textbf{The sharp differential comparison}\\[2pt]
   \(f''(p)/f'(p)\le g_n''(p)/g_n'(p)\)};
 \node[box,below=of differential] (comparison) {\textbf{Identification of the profile}\\[2pt]
   \(f\circ g_n^{-1}\text{ concave}\quad\Longrightarrow\quad f=g_n\)};
 \draw[arr] (optimality)--node[reason,right=3pt]
   {Second variations and positive contraction\\
    \S\S~\ref{sec:variations}--\ref{sec:contraction}, \ref{sec:differential}} (matrix);
 \draw[arr] (matrix)--node[reason,right=3pt]
   {\(G_A\ne0\) and the cap moment bound\\
    \S\S~\ref{sec:nonisotropy}--\ref{sec:differential}} (differential);
 \draw[arr] (differential)--node[reason,right=3pt]
   {Monotone-slope integration and endpoint comparison\\
    \S~\ref{sec:concavity}, \S~\ref{sec:comparison}} (comparison);
\end{tikzpicture}
\caption{From optimality to the cap profile. The first three boxes concern
an optimal pair at a second-order point \(p\). The last step uses the
comparison almost everywhere and the cap bound \(f\le g_n\).
Once \(f=g_n\), equality in the moment bound gives rigidity
(\S~\ref{sec:comparison}).}
\label{fig:proof}
\end{figure}

\section{Proof of the neighborhood inequality}

An optimal avoiding pair turns the inequality
\(\mu(B)+f(\mu(A))\le1\) into an equality, so admissible deformations
constrain the first and second measure variations. To use this
observation, we first prove that optimal pairs exist and that \(f\)
admits second-order expansions almost everywhere. Compactness and
rotation averaging provide these two ingredients.

\subsection{Compactness of the Funk transform and attainment}\label{sec:attainment}
We first show that the infimum defining \(f\) is attained, so that
the later variations can start from a fixed optimal pair. Weak limits
of set indicators may take values strictly between zero and one.
Compactness of \(F\) preserves zero incidence under these limits,
and density-one representatives recover sets that avoid one another.

\begin{lemma}\label{lem:compact}
For \(n\ge3\), the operator \(F\) is compact on \(L^2(M)\).
Moreover, if \(\ip{\one_D}{F\one_E}=0\), then \(D^*\) and \(E^*\)
avoid one another.
\end{lemma}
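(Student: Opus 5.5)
Compactness comes first. The operator \(F\) commutes with rotations, so it acts by a scalar on each space \(\mathcal H_{2k}\) of even spherical harmonics of degree \(2k\) (functions on \(M\) are the even functions on \(S^{n-1}\)). By the Funk--Hecke formula, the scalar is
\[
\lambda_{2k}=\frac{P_{2k}(0)}{P_{2k}(1)},
\]
where \(P_{2k}\) is the Gegenbauer polynomial for dimension \(n\). For \(n\ge3\), the explicit value of \(P_{2k}(0)\) gives \(|\lambda_{2k}|\asymp k^{-(n-2)/2}\to0\). A self-adjoint operator that is diagonal in an orthogonal decomposition into finite-dimensional eigenspaces, with eigenvalues tending to zero, is compact.

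As a fallback avoiding special functions, one can use the smoothing property of the Radon-type transform \(F\). It gains \((n-2)/2\) Sobolev derivatives, so \(F:L^2\to H^{(n-2)/2}\) is bounded, and Rellich's theorem then gives compactness. Either way, \(n\ge3\) is exactly what makes the decay nontrivial: for \(n=2\), \(F\) is the isometry \(h\mapsto h(x^\perp)\).

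For the avoidance statement, suppose \(x\in D^*\) and \(y\in E^*\) with \(x\cdot y=0\); I will contradict \(\nu(D\times E)=0\). Take \(R_0\in SO(n)\) with \(R_0e_1=x\) and \(R_0e_2=y\). For small \(r\), consider the Haar neighborhood \(U_r\) of \(R_0\) consisting of rotations \(R\) with \([Re_1]\in B_r(x)\) and \([Re_2]\in B_r(y)\). The set \(\pi_1(U_r)\) of first images fills \(B_r(x)\) as \(R\) varies with \(Re_2\) close to \(y\). The point is to show that the pushforward of normalized Haar measure on \(U_r\) to \([Re_1]\) has density bounded by a constant times \(1/\mu(B_r(x))\), uniformly in \(r\). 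The same bound should hold for the pushforward to \([Re_2]\).

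Granting that bound, density one at \(x\) gives
\[
\Pr_{U_r}\bigl([Re_1]\notin D\bigr)\le C\,\frac{\mu(B_r(x)\setminus D)}{\mu(B_r(x))}\to0,
\]
and similarly for \([Re_2]\notin E\). Hence, for small \(r\), a positive proportion of \(U_r\) has \([Re_1]\in D\) and \([Re_2]\in E\). This gives \(\nu(D\times E)>0\), since \(U_r\) has positive Haar measure.

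The main obstacle is this uniform density estimate, i.e.\ local bi-Lipschitz control of the map \(R\mapsto([Re_1],[Re_2])\) onto the incidence manifold \(\{(u,v):u\perp v\}\). It is cleaner to work directly with \(\nu\) as a smooth measure on the incidence manifold, which has dimension \(2n-3\). Near \((x,y)\), choose local coordinates \((u,w)\), where \(u\) ranges over a neighborhood of \(x\) and \(w\) parametrizes \(u^\perp\) near \(y\) through a smooth family of rotations taking \(x^\perp\) to \(u^\perp\). In these coordinates \(\nu\) has a smooth positive density.

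Take a box \(B_r(x)\times W_r\), with \(W_r\) the \(r\)-ball in \(x^\perp\) near \(y\). Its \(\nu\)-measure is comparable to \(\mu(B_r(x))\,r^{n-2}\). The set of incidences with \(u\notin D\) has \(\nu\)-measure at most \(C\mu(B_r(x)\setminus D)\,r^{n-2}\), which is \(o\) of the box measure.

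The part with \(v\notin E\) is handled the same way, but this requires uniformity. The fibers \(u^\perp\) passing through a small ball around \(y\) all pass through \(B_{Cr}(y)\), and the map \((u,w)\mapsto v\) has fibers of comparable size. So the \(\nu\)-measure of incidences with \(v\in B_{Cr}(y)\setminus E\) is at most \(C\mu(B_{Cr}(y)\setminus E)\,r^{n-1}\). This is again \(o(r^{2n-3})\), using density one at \(y\) at scale \(Cr\) (up to a change in the ball comparison constant).

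The two error sets therefore do not exhaust the box, so \(\nu(D\times E)>0\), which is the desired contradiction.
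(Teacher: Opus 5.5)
Your proof is correct, and it is more self-contained than the paper's, which cites both halves from DeCorte and Pikhurko.

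\textbf{Compactness.} The paper quotes their Corollary~8.3 for the spherical averaging operator \(A_0\), restricted to even functions. You compute the Funk--Hecke eigenvalues \(P_{2k}(0)\). This also gives the decay rate \(k^{-(n-2)/2}\), which the citation does not display.

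\textbf{Avoidance.} The paper reduces to their one-set density lemma (Lemma~8.5). It lifts to the sphere, places disjoint caps \(U,V\) of radius less than \(\pi/4\) around the orthogonal density points, and forms \(I=(\widetilde D\cap U)\cup(\widetilde E\cap V)\). Since such caps contain no orthogonal pairs, zero incidence between \(D\) and \(E\) gives \(\ip{\one_I}{A_0\one_I}=0\). This contradicts the lemma, because \(I\) has orthogonal density points. You instead prove the two-set density statement directly from the disintegration of \(\nu\) on the \((2n-3)\)-dimensional incidence manifold. This avoids the cap trick and works for any fixed angle. The cost is a local measure computation that the paper outsources.

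\textbf{One slip to fix.} In the step for \(v\notin E\), the factor should be \(r^{n-2}\), not \(r^{n-1}\). The clean justification of your phrase about the fibers of \((u,w)\mapsto v\) is the exchange symmetry of \(\nu\). The \(\nu\)-measure of \(\{(u,v):u\in B_r(x),\ v\in S\}\) equals \(\int_S\mu_{v^\perp}(B_r(x)\cap v^\perp)\,d\mu(v)\le Cr^{n-2}\mu(S)\). This holds because \(B_r(x)\cap v^\perp\) lies in a ball of radius comparable to \(r\) in the \((n-2)\)-dimensional fiber \(v^\perp\).

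The bound with \(r^{n-1}\) is false. Taking \(S=B_{Cr}(y)\), it would force the whole box to have measure \(O(r^{2n-2})\), whereas its measure is comparable to \(r^{2n-3}\). The corrected bound still gives \(Cr^{n-2}\mu(B_{Cr}(y)\setminus E)=o(r^{2n-3})\) by density one at \(y\), so your conclusion \(\nu(D\times E)>0\) stands.
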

\begin{proof}
Identify \(L^2(M)\) with the even subspace of \(L^2(S^{n-1})\).
Then \(F\) is the restriction of the spherical averaging operator
\(A_0\), which is compact by
\cite[Corollary~8.3]{DeCortePikhurko}.

For the second assertion, let \(\widetilde D,\widetilde E\) be the
antipodal lifts of \(D,E\), and suppose they have orthogonal
density-one points \(x,y\), respectively. Choose disjoint spherical
caps \(U,V\) centered at \(x,y\), of radius less than \(\pi/4\),
and put \(I=(\widetilde D\cap U)\cup(\widetilde E\cap V)\).
Neither cap contains an orthogonal pair, and the incidence between
\(\widetilde D\) and \(\widetilde E\) is zero. Hence
\(\langle\one_I,A_0\one_I\rangle_{L^2(S^{n-1})}=0\).
But \(x,y\) are density-one points of \(I\), so
\cite[Lemma~8.5]{DeCortePikhurko} makes this integral positive,
a contradiction.
\end{proof}

\Needspace{9\baselineskip}
\begin{proposition}\label{prop:attainment}
For every \(p\in[0,1]\), there are Borel sets \(A,B\subset M\)
that avoid one another, meaning \(x\cdot y\ne0\) for every
\(x\in A\) and \(y\in B\), and satisfy
\[
 \mu(A)=p,\qquad \mu(B)=1-f(p),\qquad \mu(N(A))=f(p).
\]
\end{proposition}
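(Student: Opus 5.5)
We take a minimizing sequence and pass to a weak limit. Endpoints are trivial: for \(p=0\) take \(A=B=\varnothing\) if \(f(0)=0\) (indeed \(A=\varnothing\) gives \(N(A)=\varnothing\)), and for \(p=1\) take \(A=M\), \(B=\varnothing\), since \(N(M)=M\). For \(0<p<1\), choose Borel sets \(A_k\) with \(\mu(A_k)=p\) and \(\mu(N(A_k))\to f(p)\). Put \(B_k=M\setminus N(A_k)\), which is measurable; replace it by a Borel subset of the same measure. Then \((A_k,B_k)\) avoid one another, so \(\ip{\one_{A_k}}{F\one_{B_k}}=0\) and \(\mu(B_k)\to 1-f(p)\).

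Pass to a subsequence with \(\one_{A_k}\rightharpoonup\alpha\) and \(\one_{B_k}\rightharpoonup\beta\) weakly in \(L^2(M)\), where \(0\le\alpha,\beta\le1\), \(\int\alpha=p\) and \(\int\beta=1-f(p)\). By Lemma~\ref{lem:compact}, \(F\) is compact, so \(F\one_{B_k}\to F\beta\) in norm. Hence
\[
\ip{\alpha}{F\beta}=\lim_k\ip{\one_{A_k}}{F\one_{B_k}}=0.
\]
Let \(D=\{\alpha>0\}\) and \(E=\{\beta>0\}\). Since \(F\) preserves nonnegativity and \(\alpha\ge c\one_{\{\alpha\ge c\}}\), we get \(\ip{\one_{\{\alpha\ge c\}}}{F\one_{\{\beta\ge c'\}}}=0\) for all \(c,c'>0\). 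Monotone convergence then gives \(\ip{\one_D}{F\one_E}=0\). Because \(\alpha,\beta\le1\), we have \(\mu(D)\ge p\) and \(\mu(E)\ge 1-f(p)\).

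By the second part of Lemma~\ref{lem:compact}, the Borel sets \(D^*\) and \(E^*\) avoid one another, and they have the same measures as \(D\) and \(E\). Using the latitude trimming described after \eqref{eq:latitude}, pick Borel subsets \(A\subset D^*\) with \(\mu(A)=p\) and \(B\subset E^*\) with \(\mu(B)=1-f(p)\). Subsets of an avoiding pair still avoid one another.

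Finally, \(B\cap N(A)=\varnothing\) gives \(\mu(N(A))\le 1-\mu(B)=f(p)\). Here \(N(A)\) is measurable because \(A\) is Borel, as noted in \S\ref{sec:measure-profile}. The definition of \(f\) gives \(\mu(N(A))\ge f(p)\), so all three identities hold.

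The main obstacle is that weak limits are not indicators. This is handled by the positive-support trick, which preserves zero incidence because \(F\) is positivity-preserving, and by the density-one representatives supplied by Lemma~\ref{lem:compact}, which convert zero incidence into pointwise avoidance.
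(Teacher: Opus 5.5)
Your proof is correct and is essentially the paper's argument: a minimizing sequence with complementary Borel sets \(B_k\), weak limits, compactness of \(F\) to keep zero incidence in the limit, positivity to pass to the supports \(\{\alpha>0\}\) and \(\{\beta>0\}\), Lemma~\ref{lem:compact} to recover pointwise avoidance for the density-one representatives, and exact trimming. The only slip is in your separate endpoint cases: at \(p=0\) you need \(B=M\), not \(B=\varnothing\), since \(1-f(0)=1\), and at \(p=1\) you use \(f(1)=1\) without proof; these cases are unnecessary, because your main argument works verbatim for every \(p\in[0,1]\), as in the paper.
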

\begin{proof}
Choose Borel \(A_j\) of measure \(p\) with neighborhood measures tending
to \(f(p)\), and full-measure Borel subsets
\(B_j\subset M\setminus N(A_j)\).
Pass to common weak \(L^2\) limits
\(\one_{A_j}\rightharpoonup u\), \(\one_{B_j}\rightharpoonup v\).
Then \(0\le u,v\le1\), \(\int u=p\), and \(\int v=1-f(p)\).
Compactness gives
\[
 \ip{u}{Fv}=\lim_j\ip{\one_{A_j}}{F\one_{B_j}}=0.
\]
Nonnegativity, applied first to positive level sets and then to
their union, implies zero incidence between
\(D=\{u>0\}\) and \(E=\{v>0\}\).
Their measures are at least \(p\) and \(1-f(p)\).
By Lemma~\ref{lem:compact}, their density-one representatives avoid
one another. Trim those representatives to Borel sets \(A,B\) of the
required measures. Avoidance gives \(\mu(N(A))\le f(p)\), and the
definition of \(f\) gives the reverse inequality.
\end{proof}
This is the compactness method of
\cite[proof of Theorem~8.6]{DeCortePikhurko}, applied to two sets
with one prescribed measure.

\subsection{Cap comparison and concavity}\label{sec:cap-continuity}
\label{sec:concavity}
We first establish elementary bounds and continuity of \(f\).
The continuity argument is a warm-up to the proof of concavity:
averaging over rotations finds a cap that adds enough measure to
a given set, while the cap formula controls the increase of its
neighborhood.

\begin{proposition}\label{prop:cap-continuity}
The profile \(f\) is nondecreasing and continuous on \([0,1]\), with
\begin{equation}\label{eq:elementary}
 p\le f(p)\le g_n(p),\qquad f(0)=0,\quad f(1)=1.
\end{equation}
More precisely, for \(0\le p<1\) and \(0<h\le1-p\),
\begin{equation}\label{eq:continuity}
 0\le f(p+h)-f(p)\le g_n\!\left(\frac{h}{1-p}\right).
\end{equation}
\end{proposition}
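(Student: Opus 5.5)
Monotonicity comes first. Let \(q<p\) and let \(A\) be a Borel set of measure \(p\). Trim \(A\) to a Borel subset \(A'\) of measure \(q\), using the latitude trimming of \S~\ref{sec:measure-profile}. Then \(N(A')\subset N(A)\), so \(f(q)\le\mu(N(A))\), and taking the infimum over \(A\) gives \(f(q)\le f(p)\). This is the left inequality in \eqref{eq:continuity}. For the upper bound \(f\le g_n\), we compute the neighborhood of a cap. The hyperplane \(x^\perp\) with \(|x\cdot z|\ge\cos r\) consists of points \(y\) with \(|y\cdot z|\le\sin r\). Conversely, every \(y\) in that belt is orthogonal to some \(x\) at angle at most \(r\) from the axis: rotate within the plane spanned by \(y\) and \(z\). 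Hence \(N(\cC_r(z))\) is the belt. Integrating the latitude density \eqref{eq:latitude} gives measure \(B_n(r)\) for the belt and \(A_n(r)\) for the cap, so \(f(p)\le g_n(p)\). The lower bound \(f(p)\ge p\) is the orthogonal-frame argument. Lift \(A\) and \(N(A)\) to \(S^{n-1}\) and take a rotation \(R\). If \([Re_1]\in A\), then \([Re_j]\in N(A)\) for every \(j\ge2\). Therefore
\[
\textstyle\sum_{j\ge2}\one_{N(A)}([Re_j])\ge (n-1)\one_A([Re_1]).
\]
Averaging over \(R\) gives \((n-1)\mu(N(A))\ge(n-1)\mu(A)\). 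Here \(N(A)\) is measurable because \(A\) is Borel. The endpoint values follow at once: \(f(0)=0\) is attained by the empty set, and \(f(1)\ge1\) comes from the lower bound.

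For the increment bound, fix \(h\) and \(\varepsilon>0\). Choose an optimizer \(A\) at measure \(p\), which exists by Proposition~\ref{prop:attainment}, or alternatively a near-optimizer. Let \(r\) be defined by \(A_n(r)=h/(1-p)\) and consider rotated caps \(\cC_r(z)\) with axis \(z\) uniform. Averaging over \(z\) gives
\[
\int\mu(\cC_r(z)\setminus A)\,dz=\tfrac{h}{1-p}(1-p)=h,
\]
so some axis \(z\) satisfies \(\mu(\cC_r(z)\setminus A)\ge h\). Trim \(\cC_r(z)\setminus A\) to a Borel piece \(P\) of measure exactly \(h\). Then \(A\cup P\) has measure \(p+h\), and
\[
N(A\cup P)\subset N(A)\cup N(\cC_r(z)).
\]
This yields \(f(p+h)\le f(p)+B_n(r)=f(p)+g_n(h/(1-p))\).

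Continuity on \([0,1)\) from the right follows because \(g_n(t)\to0\) as \(t\to0\). Left continuity at a point \(p_0\) requires the same bound applied with base points \(p<p_0\). For \(p\) bounded away from \(1\), the bound \(g_n(h/(1-p))\) is uniform in \(p\), so monotonicity gives \(f(p_0)-f(p_0-h)\le g_n(h/(1-p_0+h))\to0\).

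The step I expect to be the main obstacle is left continuity at \(p_0=1\), where the factor \(1/(1-p)\) degenerates. There I would use the lower bound directly: \(1\ge f(p)\ge p\to1\), so \(f(p)\to1=f(1)\). The remaining points are routine: that \(g_n\) is continuous with \(g_n(0)=0\), and that the trimming by latitude stays Borel.
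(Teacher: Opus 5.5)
Your proposal is correct and follows the paper's proof essentially step by step. The shared steps are: cap neighborhoods as belts for \(f\le g_n\), trimming for monotonicity, a Fubini average over rotated caps plus trimming for the increment bound, and the bound \(f(p)\ge p\) for continuity at \(p_0=1\). Your orthonormal-frame argument for \(\mu(N(A))\ge\mu(A)\) is the paper's bound \(0\le F\one_A\le\one_{N(A)}\) in another guise, since the incidence measure \(\nu\) is exactly the law of \(([Re_1],[Re_2])\); the single vector \(Re_2\) already suffices.
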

\begin{proof}
The neighborhood of the cap centered at \(z\) with radius
\(0\le r\le\pi/2\) is
\begin{equation}\label{eq:cap-shadow}
 N(\cC_r(z))=\{y:|y\cdot z|\le\sin r\}.
\end{equation}
Integrating \eqref{eq:latitude} proves \eqref{eq:angular} and
\eqref{eq:g}, and hence \(f(p)\le g_n(p)\). For
\(0<r<\pi/2\), open caps give open belts with the same measures.
The lower bound \(p\le f(p)\) follows from
\(0\le F\one_A\le\one_{N(A)}\) almost everywhere and
\(\int F\one_A=\mu(A)\). These bounds also give the endpoint
values in \eqref{eq:elementary}, and exact trimming makes \(f\)
nondecreasing.

To estimate the cost of increasing the measure from \(p\) to
\(p+h\), fix a Borel set \(A\) of measure \(p<1\) and a cap
\(C\) of measure \(h/(1-p)\). By Fubini,
\[
 \int_{SO(n)}\mu(RC\setminus A)\,dR=(1-p)\mu(C)=h.
\]
Thus some rotation satisfies \(\mu(RC\setminus A)\ge h\).
Trim \(RC\setminus A\) to a Borel set \(B\) of measure \(h\).
Then \(\mu(A\cup B)=p+h\) and
\(N(A\cup B)\subset N(A)\cup RN(C)\), so
\(f(p+h)\le\mu(N(A))+g_n(h/(1-p))\).
Taking the infimum over \(A\) proves \eqref{eq:continuity}.
Since \(g_n(t)\to0\) as \(t\downarrow0\), this estimate gives
right continuity; applying it at \(p-h\) gives left continuity
in the interior. Finally, \eqref{eq:elementary} gives continuity
at both endpoints.
\end{proof}

Rotation averaging also gives concavity, which supplies second-order
expansions and a monotone slope for the later comparison. We now
compare a set with a rotated copy: submodularity bounds the sum of
the neighborhood measures of their union and intersection, yielding
the midpoint inequality for \(f\).

\begin{proposition}\label{prop:concavity}
The profile \(f\) is concave on \([0,1]\), and \(f'_+(p)>0\) for
\(0<p<1\).
\end{proposition}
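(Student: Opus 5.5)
We prove concavity through the midpoint inequality
\[
 f(p+h)+f(p-h)\le 2f(p),
\]
and then use continuity. Since \(f\) is continuous by Proposition~\ref{prop:cap-continuity}, midpoint concavity on \([0,1]\) implies concavity. It is enough that the inequality holds for all sufficiently small \(h>0\), with the smallness allowed to depend on \(p\): a continuous function satisfying this local midpoint inequality at every interior point is concave. We could instead run the argument for every admissible \(h\), as described below.

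Fix \(0<p<1\). By Proposition~\ref{prop:attainment}, take an optimizer \(A\), a Borel set with \(\mu(A)=p\) and \(\mu(N(A))=f(p)\). For a rotation \(R\), put \(U=A\cup RA\) and \(I=A\cap RA\). The neighborhood operator is a union, so \(N(U)=N(A)\cup N(RA)\) and \(N(I)\subset N(A)\cap N(RA)\). Since \(N(RA)=RN(A)\), inclusion–exclusion gives
\[
 \mu(N(U))+\mu(N(I))\le 2\mu(N(A))=2f(p).
\]
The measures are \(\mu(U)=p+h(R)\) and \(\mu(I)=p-h(R)\), where \(h(R)=\mu(A\setminus RA)\).

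Next we show that \(h(R)\) takes every value in \([0,h_{\max}]\) for some \(h_{\max}>0\). The map \(R\mapsto \mu(A\setminus RA)\) is continuous on the connected group \(SO(n)\), because translation is continuous in \(L^1\). It vanishes at \(R=I\). Its Haar average equals \(p(1-p)>0\) by Fubini, so it attains some value \(h_{\max}\ge p(1-p)\). The intermediate value theorem along a path from \(I\) then gives every \(h\in[0,p(1-p)]\).

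For such \(h\), the definition of \(f\) gives
\[
 f(p+h)+f(p-h)\le \mu(N(U))+\mu(N(I))\le 2f(p).
\]
This is the local midpoint inequality at every interior \(p\). Concavity follows by the standard argument. Suppose \(f\) lies strictly below a chord somewhere. Take the point where \(f\) minus the chord is most negative; among such points, take the leftmost. The local midpoint inequality fails there, a contradiction.

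For positivity of the slope, concavity together with \(f(0)=0\) and \(f(1)=1\) forces \(f'_+\) to be nonincreasing. Suppose \(f'_+(p_0)\le0\) for some \(p_0<1\). Then \(f\) is nonincreasing on \([p_0,1]\), so \(f(1)\le f(p_0)\le g_n(p_0)<1\), a contradiction. Here \(g_n(p_0)<1\) because \(A_n^{-1}(p_0)<\pi/2\) and \(B_n\) is strictly increasing. Hence \(f'_+(p)>0\) for \(0<p<1\).

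The step that needs the most care is the continuity and connectedness argument showing that \(h(R)\) takes all small values. It is also the step where the Borel measurability of \(A\cup RA\) and \(A\cap RA\) is used; rotations preserve Borel sets, so this is routine.
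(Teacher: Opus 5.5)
Your proposal is correct and follows essentially the paper's proof. Both arguments use four ingredients:
\begin{itemize}
\item submodularity of \(\mu(N(\cdot))\) applied to \(A\) and a rotated copy \(RA\);
\item the continuous overlap function \(R\mapsto\mu(A\setminus RA)\), whose Haar average is \(p(1-p)\), together with connectedness of \(SO(n)\), to realize every \(h\in[0,p(1-p)]\);
\item the resulting midpoint inequality combined with continuity of \(f\);
\item the slope bound obtained from \(f(p)\le g_n(p)<1\) and \(f(1)=1\).
\end{itemize}
The differences are cosmetic. You start from an optimizer supplied by Proposition~\ref{prop:attainment}, whereas the paper takes an arbitrary Borel set and passes to the infimum, so it does not need attainment at this point. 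You also deduce concavity from the local midpoint inequality via the leftmost minimizer of \(f\) minus a chord, instead of the paper's equally spaced grids; both deductions are valid.
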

\begin{proof}
For Borel sets \(C,D\subset M\), we have
\(N(C\cup D)=N(C)\cup N(D)\) and
\(N(C\cap D)\subset N(C)\cap N(D)\).
Taking measures gives the submodular inequality
\begin{equation}\label{eq:submodularity}
 \mu(N(C\cup D))+\mu(N(C\cap D))
 \le\mu(N(C))+\mu(N(D)).
\end{equation}

Fix \(p\in(0,1)\) and any Borel set \(A\) of measure \(p\).
For \(R\in SO(n)\), put \(h_A(R)=\mu(A\setminus RA)\).
This is continuous: approximate \(\one_A\) in \(L^1\) by continuous
functions and use rotation invariance of \(\mu\).
Moreover, \(h_A(I)=0\), and Fubini gives
\begin{equation}\label{eq:overlap-average}
 \int_{SO(n)}h_A(R)\,dR
 =p-\int_{SO(n)}\mu(A\cap RA)\,dR
 =p(1-p).
\end{equation}
Since \(SO(n)\) is compact and connected, the image of \(h_A\)
is an interval containing both \(0\) and a value at least \(p(1-p)\).
Thus every \(h\in[0,p(1-p)]\) is realized by a rotation.
For such a rotation,
\(\mu(A\cap RA)=p-h\) and \(\mu(A\cup RA)=p+h\).
Apply \eqref{eq:submodularity} with \(C=A,D=RA\), and use
\(N(RA)=RN(A)\), to obtain
\[
 f(p-h)+f(p+h)
 \le\mu(N(A\cap RA))+\mu(N(A\cup RA))
 \le2\mu(N(A)).
\]
Taking the infimum over all \(A\) of measure \(p\) yields
\begin{equation}\label{eq:midpoint}
 f(p-h)+f(p+h)\le2f(p),
 \qquad 0\le h\le p(1-p).
\end{equation}
On any compact subinterval of \((0,1)\), the admissible upper bound
for \(h\) is uniformly positive. Hence sufficiently fine equally
spaced grids satisfy \eqref{eq:midpoint} at every interior grid
point. Their successive secant slopes are nonincreasing, so their
values lie above the endpoint chord. The continuity established in
Proposition~\ref{prop:cap-continuity}, first along these grids and
then at \(0,1\), proves concavity on \([0,1]\).

Finally, \(f'_+(p)\ge\frac{1-f(p)}{1-p}>0\), since
\(f(p)\le g_n(p)<1\) in the interior.
\end{proof}

\subsection{Projective measure variations}\label{sec:variations}
For fixed Borel sets \(A,B\subset M\), consider
\((A,B)\mapsto(TA,T^{-T}B)\), with \(T\in SL(n,\R)\), and write
\begin{equation}\label{eq:ab}
 a(T)=\mu(TA),\qquad b(T)=\mu(T^{-T}B).
\end{equation}
The inverse-transpose action on \(B\) is chosen so that an avoiding
pair remains avoiding. We now establish the properties of these
deformations that will be used in the rest of the proof.

\begin{lemma}\label{lem:feasibility}
If \((A,B)\) is an avoiding pair, then
\begin{equation}\label{eq:feasible}
 b(T)+f(a(T))\le1
\end{equation}
for every \(T\in SL(n,\R)\), with equality at \(T=I\) whenever
\((A,B)\) is optimal.
\end{lemma}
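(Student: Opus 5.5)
The argument only needs to show that the transformed pair is again an avoiding pair of Borel sets. Once that is known, the inequality follows from the definition of \(f\) and the observation that the largest set avoiding a given set is the complement of its neighborhood.

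\emph{Step 1: the transformed sets are admissible.} Fix \(T\in SL(n,\R)\). The map \(x\mapsto Tx/\|Tx\|\) is a homeomorphism of \(S^{n-1}\) commuting with the antipodal map, so it descends to a homeomorphism \(\tau_T\) of \(M\). The same holds for \(T^{-T}\). Hence \(TA=\tau_T(A)\) and \(T^{-T}B=\tau_{T^{-T}}(B)\) are Borel. As already noted before \eqref{eq:ab}, \((Tx)\cdot(T^{-T}y)=x\cdot y\), and normalization does not change whether this vanishes. So \(TA\) and \(T^{-T}B\) avoid one another, which means \(T^{-T}B\subset M\setminus N(TA)\).

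\emph{Step 2: the inequality.} By Step 1, \(N(TA)\) is \(\mu\)-measurable (see \S\ref{sec:measure-profile}). Since \(TA\) is a Borel set of measure \(a(T)\), the definition \eqref{eq:f} gives \(\mu(N(TA))\ge f(a(T))\). Combining this with the inclusion from Step 1,
\[
 b(T)\le 1-\mu(N(TA))\le 1-f(a(T)),
\]
which is \eqref{eq:feasible}.

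\emph{Step 3: equality at \(T=I\).} Let \((A,B)\) be optimal at measure \(p\). Then \(a(I)=\mu(A)=p\) and \(b(I)=\mu(B)=1-f(p)\), so \(b(I)+f(a(I))=1\).

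There is no genuine obstacle here. The only point needing care is measurability, which Step 1 settles: \(\tau_T\) is a homeomorphism, so \(TA\) is Borel and the infimum in \eqref{eq:f} applies to it.
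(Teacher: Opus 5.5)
Your proof is correct and follows the paper's argument. The paper gets avoidance of \((TA,T^{-T}B)\) from the identity \(N(TA)=T^{-T}N(A)\), which is the same inner-product invariance \((Tx)\cdot(T^{-T}y)=x\cdot y\) you use; it then applies the definition of \(f\), and optimality at \(T=I\). Your remark that \(x\mapsto Tx/\|Tx\|\) descends to a homeomorphism of \(M\), so that \(TA\) is Borel and admissible in \eqref{eq:f}, makes explicit a measurability detail the paper leaves implicit.
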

\begin{proof}
The identity \(N(TA)=T^{-T}N(A)\) shows that \((TA,T^{-T}B)\)
is also an avoiding pair. The definition of \(f\) therefore gives
\eqref{eq:feasible}. Optimality means \(\mu(B)=1-f(\mu(A))\), which
gives equality at \(T=I\).
\end{proof}

\Needspace{9\baselineskip}
Rotations enter the argument once more through the following two
properties of the deformed measures. The first follows from rotation
invariance of \(\mu\) and the identities \((RT)A=R(TA)\) and
\((RT)^{-T}B=R(T^{-T}B)\) for \(R\in SO(n)\).

\begin{lemma}\label{lem:rotation-invariance}
For every \(R\in SO(n)\) and \(T\in SL(n,\R)\),
\(a(RT)=a(T)\) and \(b(RT)=b(T)\).
\end{lemma}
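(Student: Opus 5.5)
The plan is to verify the two set identities
\[
(RT)A=R(TA),\qquad (RT)^{-T}B=R(T^{-T}B),
\]
and then apply rotation invariance of \(\mu\). Throughout, \(TE\) denotes the image of \(E\) under the normalized map \(\phi_T(x)=Tx/\|Tx\|\). This map is well defined on projective points, since \(T\) is invertible and \(\phi_T(-x)=-\phi_T(x)\).

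For the first identity, take a unit vector \(x\). Because \(R\) is orthogonal, \(\|RTx\|=\|Tx\|\), so
\[
\phi_{RT}(x)=\frac{RTx}{\|RTx\|}=R\,\frac{Tx}{\|Tx\|}=R\,\phi_T(x).
\]
Hence \(\phi_{RT}=R\circ\phi_T\) as maps on \(M\), and therefore \((RT)A=R(TA)\).

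For the second identity, use \((RT)^{-T}=(T^{-1}R^{-1})^T=R^{-T}T^{-T}\). Since \(R^{-T}=R\) for \(R\in SO(n)\), this gives \((RT)^{-T}=RT^{-T}\). The same computation as above, applied with \(T^{-T}\) in place of \(T\), then yields \((RT)^{-T}B=R(T^{-T}B)\).

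Finally, \(\mu\) is the pushforward of the rotation-invariant measure \(\sigma\), so it is invariant under the isometric action of \(SO(n)\) on \(M\). Measurability is preserved because \(\phi_T\) is a diffeomorphism of \(M\) and so carries Borel sets to Borel sets. It follows that
\[
a(RT)=\mu(R(TA))=\mu(TA)=a(T),
\]
and likewise \(b(RT)=b(T)\).

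No step is a real obstacle. The only point needing care is the normalization, and it is harmless because orthogonal matrices preserve norms.
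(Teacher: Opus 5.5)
Your proposal is correct and matches the paper's argument. The paper also derives the lemma from the identities \((RT)A=R(TA)\) and \((RT)^{-T}B=R(T^{-T}B)\) together with rotation invariance of \(\mu\); you have simply written out why those identities hold, namely \(\|RTx\|=\|Tx\|\) and \(R^{-T}=R\).
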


The second property concerns the conjugates \(RTR^{-1}\), which
change the orientation of the deformation. To average their measures,
we use the projective Jacobian
\begin{equation}\label{eq:Jacobian}
 J_T(x)=\|Tx\|^{-n}.
\end{equation}
This is also the classical Poisson kernel for real projective boundary
data; see Veech~\cite[\S8, equation (8.12)]{Veech}, with \(g=T^{-1}\).
Change of variables gives
\begin{equation}\label{eq:measure-jacobians}
 a(T)=\int_A J_T(x)\,d\mu(x),\qquad
 b(T)=\int_B J_{T^{-T}}(x)\,d\mu(x).
\end{equation}

\begin{lemma}\label{lem:rotation-average}
For every \(T\in SL(n,\R)\),
\begin{equation}\label{eq:rotation-average}
 \int_{SO(n)}a(RTR^{-1})\,dR=\mu(A),\qquad
 \int_{SO(n)}b(RTR^{-1})\,dR=\mu(B),
\end{equation}
where \(dR\) is normalized Haar measure.
\end{lemma}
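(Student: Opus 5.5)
The plan is to write \(a(RTR^{-1})\) via the Jacobian formula \eqref{eq:measure-jacobians} and then exchange the order of integration using Fubini. By Lemma~\ref{lem:rotation-invariance} (left multiplication by a rotation does not change \(a\)), we have \(a(RTR^{-1})=a(TR^{-1})\). Changing variables \(x=R^{-1}x'\) in the Jacobian formula, or directly using \(\mu(TR^{-1}A)\), gives
\[
 a(TR^{-1})=\int_{R^{-1}A}J_T(x)\,d\mu(x)=\int_M \one_A(Rx)\,J_T(x)\,d\mu(x).
\]
The integrand is nonnegative and jointly measurable in \((R,x)\), so Tonelli's theorem applies. Integrating over \(R\in SO(n)\) first gives \(\int_{SO(n)}\one_A(Rx)\,dR=\mu(A)\) for every \(x\), because for fixed \(x\) the law of \([Rx]\) under Haar measure is the invariant probability \(\mu\) on \(M\).

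We are therefore reduced to the identity \(\int_M J_T\,d\mu=1\) for every \(T\in SL(n,\R)\). This is the Jacobian formula applied with \(A=M\): since \(x\mapsto Tx/\|Tx\|\) is a bijection of \(M\), we get \(a(T)=\mu(TM)=\mu(M)=1\). For a self-contained check, I would verify \(\int_{S^{n-1}}\|Tx\|^{-n}\,d\sigma(x)=|\det T|^{-1}=1\) through the polar-coordinates computation. Integrate the Gaussian \(e^{-\|Ty\|^2/2}\) over \(\R^n\) in polar coordinates \(y=\rho x\). The radial integral produces \(\|Tx\|^{-n}\) times the constant that appears for \(T=I\), and the total integral equals \(|\det T|^{-1}\) times the Gaussian integral. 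Combining these gives \(\int_{SO(n)}a(RTR^{-1})\,dR=\mu(A)\cdot1=\mu(A)\).

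The statement for \(b\) follows by the same argument applied to \(B\) and the matrix \(T^{-T}\in SL(n,\R)\). Orthogonality of \(R\) gives \((RTR^{-1})^{-T}=RT^{-T}R^{-1}\), so \(b(RTR^{-1})\) is the \(a\)-functional of \(B\) evaluated at a conjugate of \(T^{-T}\), and the first identity applies.

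The only point needing care is justifying the Jacobian formula \eqref{eq:measure-jacobians} itself. The paper quotes it, and I would cite the standard change of variables: the map \(x\mapsto Tx/\|Tx\|\) has spherical Jacobian \(|\det T|\,\|Tx\|^{-n}\), and its inverse is \(x\mapsto T^{-1}x/\|T^{-1}x\|\). With that formula taken as given, the proof reduces to Tonelli's theorem and the invariance of \(\mu\) under rotations, and I expect no real obstacle.
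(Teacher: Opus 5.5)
Your proof is correct and is essentially the paper's argument: Tonelli, rotation invariance of \(\mu\), and the normalization \(\int_M J_T\,d\mu=1\), with the \(b\) case reduced to \(T^{-T}\) via \((RTR^{-1})^{-T}=RT^{-T}R^{-1}\). The only difference is cosmetic: the paper writes \(J_{RTR^{-1}}(x)=J_T(R^{-1}x)\) and averages the Jacobian over \(R\) for each fixed \(x\), getting \(1\). You instead first remove the left rotation using Lemma~\ref{lem:rotation-invariance}, then average the indicator \(\one_A(Rx)\) to get \(\mu(A)\), and use the normalization of \(J_T\) at the end.
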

\begin{proof}
Rotations have Jacobian one, so
\(J_{RTR^{-1}}(x)=J_T(R^{-1}x)\). Hence, for every unit \(x\),
\[
 \int_{SO(n)}J_{RTR^{-1}}(x)\,dR
 =\int_M J_T(y)\,d\mu(y)=1.
\]
The last equality is change of variables on \(M\). Integrating over
\(A\) proves the first identity in \eqref{eq:rotation-average}; using
\(T^{-T}\) and integrating over \(B\) proves the second.
\end{proof}

The same change-of-variables formula gives regularity in \(T\): the
sets of integration stay fixed, and all dependence on the matrix is
carried by the Jacobian.

\begin{lemma}\label{lem:analytic-variations}
The functions \(a,b\) are real analytic on \(SL(n,\R)\), and their
derivatives may be computed by differentiating the projective
Jacobians under the integral. For \(H\in V\), the first two Jacobian
derivatives along \(e^{sH}\) are
\begin{equation}\label{eq:variations}
 J'_0=-n x^THx,\qquad
 J''_0=n(n+2)(x^THx)^2-2n x^TH^2x,
 \quad J_s=J_{e^{sH}}.
\end{equation}
\end{lemma}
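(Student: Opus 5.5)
The plan is to work from the change-of-variables representation \eqref{eq:measure-jacobians}. There the sets of integration are fixed and the matrix enters only through \(J_T(x)=\|Tx\|^{-n}=(x^TT^TTx)^{-n/2}\). On a neighborhood \(\mathcal U\) of any fixed \(T_0\in SL(n,\R)\), the quadratic form \(x^TT^TTx\) is bounded above and below by positive constants, uniformly in unit \(x\). Its lower bound is at least the square of the smallest singular value of \(T\).

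Next, extend \(T\mapsto x^TT^TTx\) to a holomorphic polynomial in complex matrices \(T\) near \(\mathcal U\). Its real part stays bounded below by a positive constant on a small complex neighborhood, uniformly in \(x\in S^{n-1}\). The principal branch of \(w\mapsto w^{-n/2}\) on \(\operatorname{Re}w>0\) is holomorphic, so \(J_T(x)\) extends holomorphically to that neighborhood. The extension is uniformly bounded in \((T,x)\) and jointly continuous.

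For the integrals, dominated convergence together with Morera's theorem (equivalently, Cauchy estimates for all derivatives, uniform in \(x\)) shows that \(T\mapsto\int_A J_T\,d\mu\) is holomorphic on the complex neighborhood. It is therefore real analytic on \(SL(n,\R)\), which is a real analytic submanifold of the matrix space. The same Cauchy estimates justify differentiating under the integral sign to every order. The argument for \(b\) is identical, since \(T\mapsto T^{-T}\) is real analytic on \(SL(n,\R)\).

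For \eqref{eq:variations}, take \(H\in V\) and \(T=e^{sH}\). Because \(H\) is symmetric, \(T^TT=e^{2sH}\), so
\[
 q(s):=x^Te^{2sH}x=1+2s\,x^THx+2s^2\,x^TH^2x+O(s^3),
\]
using \(\|x\|=1\). Expanding \(q^{-n/2}\) to second order gives
\[
 (1+u)^{-n/2}=1-\tfrac n2u+\tfrac{n(n+2)}8u^2+O(u^3),\qquad u=2s\,x^THx+2s^2\,x^TH^2x .
\]
The coefficient of \(s\) is \(-n\,x^THx\). The coefficient of \(s^2\) is
\[
 -n\,x^TH^2x+\tfrac{n(n+2)}2(x^THx)^2 .
\]
Doubling it to obtain the second derivative yields \(J''_0=n(n+2)(x^THx)^2-2n\,x^TH^2x\).

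The only step needing care is the uniform holomorphic extension, that is, keeping the quadratic form off the branch cut uniformly in \(x\). This follows from compactness of \(S^{n-1}\) and continuity of the form in \((T,x)\). Everything else is routine.
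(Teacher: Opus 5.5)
Your proposal is correct and follows essentially the paper's argument. You keep the sets of integration fixed, use a uniform positive lower bound on \(\|Tx\|\) near any \(T_0\), and control a local analytic expansion uniformly in \(x\) to justify termwise integration and differentiation under the integral; you phrase this via a bounded holomorphic extension and Cauchy estimates, where the paper uses the binomial series directly. You then expand \(q(s)^{-n/2}\) to second order with \(q(s)=x^Te^{2sH}x\), and your coefficients match \eqref{eq:variations}.
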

\begin{proof}
In \eqref{eq:measure-jacobians}, both integrals are over fixed sets.
On a compact matrix neighborhood, the norms in the denominators have
a uniform positive lower bound. The local binomial expansions converge
uniformly in \(x\), as do their differentiated series on smaller
neighborhoods. Termwise integration proves analyticity and permits
differentiation under the integral, without any boundary regularity
of the sets.

For the derivative formulas, put
\(q(s)=\|e^{sH}x\|^2=x^Te^{2sH}x\). Then
\[
 q(0)=1,\qquad q'(0)=2x^THx,\qquad q''(0)=4x^TH^2x.
\]
Since \(J_s(x)=q(s)^{-n/2}\), the chain rule gives
\[
 J'_0=-\frac n2 q'(0),\qquad
 J''_0=\frac{n(n+2)}4 q'(0)^2-\frac n2 q''(0).
\]
Substitution yields \eqref{eq:variations}.
\end{proof}

The later Taylor argument requires a variation that is not locally
constant. We obtain this from uniqueness of the projective Poisson
transform.

\begin{lemma}\label{lem:nonconstancy}
If \(0<\mu(A)<1\), then \(H\mapsto a(e^H)\) is nonconstant in every
neighborhood of \(0\in V\).
\end{lemma}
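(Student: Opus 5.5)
Since \(a\) is real analytic (Lemma~\ref{lem:analytic-variations}) and \(V\) is connected, \(H\mapsto a(e^H)\) is constant near \(0\) exactly when it is constant on all of \(V\). So it suffices to rule out \(a(e^H)=\mu(A)\) for every \(H\in V\). Every \(T\in SL(n,\R)\) has a polar decomposition \(T=Re^H\) with \(R\in SO(n)\), \(H\in V\). By Lemma~\ref{lem:rotation-invariance}, \(a(T)=a(e^H)\). Hence global constancy would give \(a(T)=\mu(A)\) for all \(T\in SL(n,\R)\), that is,
\[
 \int_A \|Tx\|^{-n}\,d\mu(x)=\mu(A)\qquad\text{for all }T\in SL(n,\R).
\]
Replacing \(T\) by \(T^{-1}\), this says that the projective Poisson transform \(P\one_A(g)=\int_M J_{g^{-1}}(x)\one_A(x)\,d\mu(x)\) is the constant \(\mu(A)\). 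The constant function \(\mu(A)\cdot1\) has the same transform, because \(\int_M J_T\,d\mu=1\).

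The key step is injectivity of the Poisson transform on \(L^1(M)\), or at least on \(L^2(M)\), which is the uniqueness the paper alludes to. Granting it, \(\one_A=\mu(A)\) almost everywhere. This is impossible when \(0<\mu(A)<1\), since an indicator takes only the values \(0\) and \(1\).

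To prove injectivity directly, I would take \(h=\one_A-\mu(A)\), which is even and has zero mean, and suppose \(\int h(x)\|Tx\|^{-n}\,d\mu=0\) for all \(T\). I would restrict to positive definite \(T=e^H\), so that \(\|Tx\|^{2}=x^Te^{2H}x\). The family \(\{(x^TQx)^{-n/2}:Q\succ0,\ \det Q=1\}\) then has to be shown to span a dense subspace of the even functions in \(L^2(S^{n-1})\). I would do this by Taylor expansion at \(Q=I\). Take \(Q=I+tK\), so that
\[
 (1+t\,x^TKx)^{-n/2}=\sum_k \binom{-n/2}{k}t^k(x^TKx)^k .
\]
Dropping the normalization \(\det Q=1\) only rescales these functions by constants, so vanishing of the integrals for all small \(t\) and all symmetric \(K\) gives \(\int h\,(x^TKx)^k\,d\mu=0\) for every \(k\ge0\). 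The powers \((x^TKx)^k\), over all symmetric \(K\), span all homogeneous polynomials of degree \(2k\) by polarization. Restricted to the sphere, these exhaust the even polynomials, which are dense in even \(L^2\). Therefore \(h=0\).

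The main obstacle is that the determinant constraint is not removable at face value. The integrand \((x^TQx)^{-n/2}\) is homogeneous of degree \(-n/2\) in \(Q\), however, so \((cQ)\) yields \(c^{-n/2}\) times the same function, and the scale-free family is what matters. I expect this to be the only delicate point, together with justifying termwise integration (uniform convergence for \(\|tK\|<1\)). The alternative route is simply to cite Veech's uniqueness theorem for the projective Poisson transform~\cite{Veech}.
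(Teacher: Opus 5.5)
Your proof is correct. Your reduction is the same as the paper's: analyticity on the connected space \(V\), then polar decomposition and Lemma~\ref{lem:rotation-invariance}, giving \(a(T)=\mu(A)\) for all \(T\in SL(n,\R)\). After that, both arguments use uniqueness for the projective Poisson transform, comparing \(\one_A\,d\mu\) with \(\mu(A)\,d\mu\).

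The difference is in that last step. The paper simply cites Veech's uniqueness theorem for finite positive Borel measures. You instead prove the needed injectivity directly, and every step holds:
\begin{itemize}
\item homogeneity of \((x^TQx)^{-n/2}\) in \(Q\) removes the determinant constraint, which answers the point you flagged as delicate;
\item the binomial series converges uniformly in \(x\) for \(|t|\,\|K\|_{\mathrm{op}}<1\), so termwise integration against the bounded function \(h\) is justified;
\item every coefficient \(\binom{-n/2}{k}\) is nonzero;
\item taking \(K=vv^T\), the powers \((v\cdot x)^{2k}\) span the homogeneous polynomials of degree \(2k\);
\item even polynomials are dense in the even part of \(L^2(S^{n-1})\), which contains the lift of \(h\).
\end{itemize}

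Your route is self-contained and needs less. The expansion only uses positive definite matrices close to \(I\). Normalizing \(I+tK\) to determinant one gives \(e^{2H}\) with \(H\) small, so local constancy of \(a(e^H)\) near \(0\) already yields \(\int h\,(x^TQx)^{-n/2}\,d\mu=0\) for all \(Q=I+tK\) with small \(t\). On your route, the continuation to all of \(V\), the polar decomposition, and rotation invariance are therefore unnecessary. The paper's route is shorter on the page, but it rests on an external theorem about the Poisson transform over all of \(SL(n,\R)\).
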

\begin{proof}
Put \(p=\mu(A)\) and suppose that \(H\mapsto a(e^H)\) is constant
near zero. By Lemma~\ref{lem:analytic-variations}, it is analytic
on the connected space \(V\), so \(a(e^H)=p\) for every \(H\in V\).
Every \(T\in SL(n,\R)\) has a polar decomposition \(T=Re^H\),
with \(R\in SO(n)\) and \(H\in V\). Rotation invariance
(Lemma~\ref{lem:rotation-invariance}) therefore gives \(a(T)=p\).
By \eqref{eq:measure-jacobians} and \(\int_M J_T\,d\mu=1\),
\[
 \int_M J_T\one_A\,d\mu
 =p=\int_M J_Tp\,d\mu
 \qquad(T\in SL(n,\R)).
\]
A finite positive Borel measure on \(M\) is uniquely determined by
these Poisson integrals; see Veech~\cite[pp.~921--922, proof of
Theorem~8.7 and Remark~8.10]{Veech}, with \(g=T^{-1}\).
Hence \(\one_A\,d\mu=p\,d\mu\), so \(\one_A=p\) almost everywhere,
contrary to \(0<p<1\).
\end{proof}

\subsection{A contraction identity for second variations}\label{sec:contraction}
For a Borel set \(E\subset M\) and a deformation direction \(H\in V\),
the first and second variations of its measure are the first and second
derivatives of \(\mu(e^{sH}E)\) at \(s=0\).
Using the change-of-variables formula \eqref{eq:measure-jacobians},
we differentiate under the integral by Lemma~\ref{lem:analytic-variations}
and substitute \(J'_0\) from \eqref{eq:variations}. The identity
\(x^THx=\ip{xx^T}{H}\), the definitions in \eqref{eq:moments}, and
\(\tr H=0\) then give
\[
 \begin{aligned}
 \left.\frac{d}{ds}\mu(e^{sH}E)\right|_{s=0}
 &=\int_E J'_0(x)\,d\mu(x)\\
 &=-n\int_E x^THx\,d\mu(x)\\
 &=-n\ip{S_E}{H}\\
 &=\ip{G_E}{H}.
 \end{aligned}
\]
Thus \(G_E\) records the first measure derivative in every direction.
For \(a\) and \(b\), the first derivatives along \(e^{sH}\) are
\(\ip{G_A}{H}\) and \(-\ip{G_B}{H}\), respectively; the minus sign
comes from deforming \(B\) by \(e^{-sH}\). We will represent the
second measure derivatives by self-adjoint operators on \(V\),
constructed below from the formula for \(J''_0\) in
\eqref{eq:variations}.

For an optimal avoiding pair, \eqref{eq:feasible} gives
\(b(T)+f(a(T))\le1\), with equality at \(T=I\). At a second-order
point \(p=\mu(A)\) of \(f\), the second-order term of this expression
along \(e^{sH}\) is therefore nonpositive for every \(H\in V\).
The operator representing its negative is therefore positive
semidefinite. This is the positivity supplied by optimality.

We now construct a linear map from self-adjoint operators on \(V\)
to symmetric \(n\times n\) matrices that preserves positive
semidefiniteness. Its key feature is that it sends the operator
representing each second measure derivative to a multiple of \(G_E\).
Applying it to the optimality condition will therefore give an
inequality involving the moment matrices.

Fix an orthonormal basis \(H_1,\dots,H_d\) of \(V\).
For a self-adjoint operator \(Q:V\to V\), let
\(Q_{ij}=\ip{QH_j}{H_i}\) be its matrix coefficients in this basis,
so that \(QH_j=\sum_iQ_{ij}H_i\). Define
\begin{equation}\label{eq:Psi}
 \Psi(Q)=\sum_{i,j}Q_{ij}\frac{H_iH_j+H_jH_i}{2}.
\end{equation}
The coefficients \(Q_{ij}=Q_{ji}\) are real numbers, and
\(H_iH_j\) and \(H_jH_i\) are ordinary matrix products.
Their average is symmetric because \((H_iH_j)^T=H_jH_i\),
so \(\Psi(Q)\) is a symmetric \(n\times n\) matrix.
This construction is an instance of the standard \emph{partial trace}
from matrix analysis; see \cite[\S4.3, (4.34)]{Bhatia}.
The next lemma proves its properties directly in the present setting.

\begin{lemma}\label{lem:positive-contraction}
The map \(\Psi\) is independent of the chosen orthonormal basis of
\(V\). For every self-adjoint \(Q:V\to V\),
\(\tr\Psi(Q)=\tr_VQ\). Moreover,
\begin{equation}\label{eq:positive}
 Q\succeq0\ \Longrightarrow\ \Psi(Q)\succeq0.
\end{equation}
\end{lemma}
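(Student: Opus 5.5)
The plan is to rewrite \(\Psi\) in a basis-free form, from which all three properties follow at once. Identify a self-adjoint \(Q\) on \(V\) with the symmetric tensor \(\sum_{i,j}Q_{ij}H_i\otimes H_j\in V\otimes V\). This tensor is basis independent, since it is the image of \(Q\) under the canonical isomorphism \(\operatorname{End}(V)\cong V\otimes V^*\cong V\otimes V\) induced by the Frobenius inner product. Next consider the bilinear map \(\beta:V\times V\to\Sym(n)\), \(\beta(H,K)=\tfrac12(HK+KH)\). It is symmetric and does not depend on any choice of basis, so it induces a linear map \(\tilde\beta:V\otimes V\to\Sym(n)\). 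By construction \(\Psi(Q)=\tilde\beta\bigl(\sum Q_{ij}H_i\otimes H_j\bigr)\), which proves basis independence. If one prefers a direct computation, substitute \(H'_k=\sum_i O_{ik}H_i\) with \(O\) orthogonal. The coefficients transform as \(Q'=O^TQO\), and the double sum is unchanged because \(OO^T=I\).

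The trace identity is immediate from this description: \(\tr\Psi(Q)=\sum_{i,j}Q_{ij}\tr(H_iH_j)=\sum_{i,j}Q_{ij}\ip{H_i}{H_j}=\sum_iQ_{ii}=\tr_VQ\).

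For positivity, take \(Q\succeq0\) and use the spectral theorem on \(V\). Write \(Q=\sum_k\lambda_k\,\ip{\cdot}{K_k}K_k\) with \(\lambda_k\ge0\) and \(K_k\) orthonormal in \(V\). Because \(\Psi\) is linear and basis independent, it is enough to evaluate it on each rank-one term. Choosing a basis with \(H_1=K_k\), the rank-one operator \(\ip{\cdot}{K}K\) has a single nonzero coefficient \(Q_{11}=1\), so it maps to \(K^2\). Hence
\[
 \Psi(Q)=\sum_k\lambda_kK_k^2 .
\]
Each \(K_k^2=K_k^TK_k\) is positive semidefinite, since \(z^TK^2z=\|Kz\|^2\ge0\). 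A sum of such matrices with nonnegative weights is again positive semidefinite, which gives \eqref{eq:positive}.

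The only step that needs care is the basis independence, because the positivity argument depends on being free to choose the basis adapted to the eigenvectors of \(Q\). Once it is established, everything else reduces to these one-line computations. This is the standard fact that a partial trace is completely positive, applied to the embedding \(V\subset\Sym(n)\).
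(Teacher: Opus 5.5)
Your proof is correct and rests on the same key identity as the paper's, \(\Psi(Q)=\sum_k\lambda_kK_k^2\) for an orthonormal eigenbasis \(K_k\) of \(Q\), with positivity following from \(K_k^2\succeq0\). The difference is one of ordering: the paper expands \(Q_{ij}=\sum_k\lambda_k\ip{K_k}{H_i}\ip{K_k}{H_j}\) in an arbitrary basis and reads off both basis independence and the trace identity from that formula, whereas you prove basis independence first and then evaluate in an adapted basis, which is equally valid.
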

\begin{proof}
Choose an orthonormal eigenbasis \(T_1,\dots,T_d\) of \(Q\), with
\(QT_k=\lambda_kT_k\). Then
\(Q_{ij}=\sum_k\lambda_k\ip{T_k}{H_i}\ip{T_k}{H_j}\).
Since \(Q_{ij}=Q_{ji}\), substitution into \eqref{eq:Psi} gives
\[
 \begin{aligned}
 \Psi(Q)
 &=\sum_{i,j}Q_{ij}H_iH_j\\
 &=\sum_k\lambda_k\sum_{i,j}\ip{T_k}{H_i}\ip{T_k}{H_j}H_iH_j\\
 &=\sum_k\lambda_k
   \left(\sum_i\ip{T_k}{H_i}H_i\right)
   \left(\sum_j\ip{T_k}{H_j}H_j\right)\\
 &=\sum_k\lambda_kT_k^2.
 \end{aligned}
\]
This expression is independent of the basis \(H_i\).
Since \(\tr(T_k^2)=\ip{T_k}{T_k}=1\),
\[
 \tr\Psi(Q)=\sum_k\lambda_k\tr(T_k^2)
 =\sum_k\lambda_k=\tr_VQ.
\]
If \(Q\succeq0\), then \(\lambda_k\ge0\) and each
\(T_k^2\succeq0\), so \(\Psi(Q)\succeq0\).
\end{proof}

We now encode the Jacobian second derivative in \eqref{eq:variations}
as an operator. For each unit \(x\), define \(Q_x:V\to V\)
by its matrix coefficients
\[
 (Q_x)_{ij}=n(n+2)(x^TH_ix)(x^TH_jx)-2n(H_ix)\cdot(H_jx).
\]
These coefficients are symmetric in \(i,j\), so \(Q_x\) is
self-adjoint. Expanding \(H\) in the basis \(H_i\) gives
\(\ip{Q_xH}{H}=J''_0(x)\).

\begin{lemma}\label{prop:contraction}
For every unit \(x\),
\begin{equation}\label{eq:contraction}
 \Psi(Q_x)=\frac{n-2}{2}(nxx^T-I),\qquad
 \tr_VQ_x=0.
\end{equation}
\end{lemma}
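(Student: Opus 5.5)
The plan is to avoid computing \(\Psi(Q_x)\) entry by entry. Instead I will combine an equivariance argument with two scalar computations, both done using the projection kernel of \(V\).

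\emph{Equivariance.} Since \(Q_{ij}=Q_{ji}\), the definition \eqref{eq:Psi} reduces to \(\Psi(Q)=\sum_{i,j}Q_{ij}H_iH_j\). Substituting the coefficients of \(Q_x\) gives
\[
 \Psi(Q_x)=n(n+2)\Bigl(\sum_i (x^TH_ix)H_i\Bigr)^2-2n\sum_{i,j}(H_ix)\cdot(H_jx)\,H_iH_j .
\]
By Lemma~\ref{lem:positive-contraction}, \(\Psi\) does not depend on the orthonormal basis. For \(R\in SO(n)\), the family \(RH_iR^T\) is again an orthonormal basis of \(V\). Replacing \(x\) by \(Rx\) and reindexing by this basis gives \(\Psi(Q_{Rx})=R\,\Psi(Q_x)R^T\). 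A symmetric matrix commuting with the stabilizer of \(x\) in \(SO(n)\) has the form \(\alpha\,xx^T+\beta I\), because for \(n\ge3\) this stabilizer acts irreducibly on \(x^\perp\). So it suffices to find the two scalars \(\alpha,\beta\), which are independent of \(x\).

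\emph{The trace.} Here I use the kernel identity
\[
 \sum_i (H_i)_{ab}(H_i)_{cd}=\tfrac12(\delta_{ac}\delta_{bd}+\delta_{ad}\delta_{bc})-\tfrac1n\delta_{ab}\delta_{cd},
\]
which holds because both sides represent the orthogonal projection of \(\Sym(n)\) onto \(V\). It gives two consequences:
\[
 \sum_i(x^TH_ix)H_i=xx^T-\tfrac1nI,\qquad \sum_iH_i^2=\tfrac{(n+2)(n-1)}{2n}I .
\]
Hence
\[
 \tr_VQ_x=\sum_i(Q_x)_{ii}=n(n+2)\bigl\|xx^T-\tfrac1nI\bigr\|_F^2-2n\,x^T\Bigl(\sum_iH_i^2\Bigr)x
 =(n+2)(n-1)-(n+2)(n-1)=0 .
\]
By Lemma~\ref{lem:positive-contraction}, this is also \(\tr\Psi(Q_x)\), so \(\alpha+n\beta=0\).

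\emph{The remaining coefficient.} I will evaluate \(x^T\Psi(Q_x)x\), or equivalently \(e_1^T\Psi(Q_{e_1})e_1\). The first term contributes \(n(n+2)\,x^T(xx^T-\tfrac1nI)^2x=n(n+2)(1-\tfrac1n)^2\). For the second term I need
\[
 \sum_{i,j}(x^TH_iH_jx)(x^TH_iH_jx)=\sum_{i,j}\bigl((H_iH_j)_{xx}\bigr)^2 .
\]
This can be computed by applying the kernel identity twice, once to the sum over \(i\) and once to the sum over \(j\), and then contracting the Kronecker deltas. It yields a rational function of \(n\), and combining it with the first term should give \(\alpha+\beta=\tfrac{(n-2)(n-1)}{2}\). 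Together with \(\alpha=-n\beta\), this gives \(\beta=-\tfrac{n-2}{2}\) and \(\alpha=\tfrac{n(n-2)}{2}\), which is the stated formula \eqref{eq:contraction}.

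\emph{Main obstacle.} The hardest step is the index bookkeeping in this double-kernel contraction for the second term. To keep it manageable I will fix \(x=e_1\) and split each basis element \(H_i\) into its \(e_1e_1\)-block, its \(e_1\)-row block and its \(x^\perp\)-block, using an explicit orthonormal basis adapted to this splitting. I will then count the contributions of each block directly. As a check, at \(n=2\) the answer must vanish identically; this is consistent with the factor \(n-2\) and is a useful test of the arithmetic.
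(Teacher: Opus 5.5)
Your argument is correct, and it takes a different route from the paper. One step is asserted rather than computed, but it holds.

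The paper works in coordinates adapted to \(x\). It splits \(V=V_0\oplus V_u\oplus V_a\), checks that these are eigenspaces of \(Q_x\) with eigenvalues \(0,-n,n(n-1)\), and evaluates the spectral formula \(\Psi(Q)=\sum_k\lambda_kT_k^2\) on explicit orthonormal eigenbases. The trace identity is then read off from \(\tr\Psi=\tr_V\).

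You instead use \(SO(n)\)-equivariance and irreducibility of the stabilizer on \(x^\perp\) to force \(\Psi(Q_x)=\alpha xx^T+\beta I\). You then fix the two scalars by basis-free contractions with the projection kernel of \(V\). Your reduction and your computation of \(\tr_VQ_x\) are right.

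The step you left as ``should give'' does hold, and it needs neither a double contraction nor an adapted basis. With \(w_i=H_ix\), one has \(\sum_{i,j}(w_i\cdot w_j)^2=\|M\|_F^2\) for \(M=\sum_iH_ixx^TH_i\). A single application of the kernel identity gives \(M=\frac{n-2}{2n}xx^T+\frac12I\), with eigenvalue \(1-\frac1n\) on \(x\) and \(\frac12\) on \(x^\perp\). Hence
\[
 \alpha+\beta=n(n+2)\Bigl(1-\tfrac1n\Bigr)^2-2n\Bigl(\Bigl(1-\tfrac1n\Bigr)^2+\tfrac{n-1}{4}\Bigr)=(n-1)^2-\tfrac{n(n-1)}{2}=\tfrac{(n-1)(n-2)}{2},
\]
as you predicted. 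Combined with \(\alpha+n\beta=0\), this gives the stated formula.

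What each approach buys:
\begin{itemize}
\item The paper's computation is elementary, works verbatim for every \(n\ge2\), and exposes the full spectrum of \(Q_x\). That makes \(\tr_VQ_x=-n(n-1)+n(n-1)=0\) visible directly.
\item Your route explains why the answer must be a combination of \(xx^T\) and \(I\), and reduces the verification to two scalar identities. The price is the stabilizer argument, which needs \(n\ge3\) (harmless here), and the kernel identity.
\end{itemize}
Your fallback plan with an adapted block basis is essentially the paper's setup. Once you have that basis, it is simpler to diagonalize \(Q_x\) than to do the index bookkeeping.
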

\begin{proof}
The matrix \(P=xx^T\) satisfies \(Pv=x(x\cdot v)\). Since \(x\)
is a unit vector, \(P\) is the orthogonal projection onto \(\R x\);
in particular, \(P^2=P\) and \(\tr P=1\).

Choose an orthonormal basis \((x,e_1,\dots,e_{n-1})\) of \(\R^n\).
In these coordinates, every \(H\in V\) has a unique representation
\[
 H=\begin{pmatrix}
 a&u^T\\
 u&C_0-\dfrac{a}{n-1}I_{n-1}
 \end{pmatrix},\qquad
 u\in\R^{n-1},\quad C_0^T=C_0,\quad \tr C_0=0.
\]
Here \(Hx=ax+\sum_i u_i e_i\). Thus \(Hx=0\) exactly when
\(a=0\) and \(u=0\): these are the matrices with \(x\) in their
kernel and an arbitrary symmetric trace-free block on \(x^\perp\).

Let \(V_0,V_u,V_a\) denote the subspaces obtained by varying only
\(C_0\), only \(u\), or only \(a\), with the other two set to zero.
Thus \(V_0=\{H\in V:Hx=0\}\), and
\[
 V_u=\left\{\begin{pmatrix}0&u^T\\u&0\end{pmatrix}
       :u\in\R^{n-1}\right\},\qquad
 V_a=\left\{a\begin{pmatrix}1&0\\0&-\dfrac1{n-1}I_{n-1}\end{pmatrix}
       :a\in\R\right\}.
\]
These three subspaces are mutually orthogonal and their sum is \(V\).
To determine how \(Q_x\) acts on them, let \(K\in V\) have
corresponding blocks \(D_0,v,b\). The Frobenius inner product
and the defining coefficients of \(Q_x\) give
\[
 \begin{aligned}
 \ip{H}{K}
 &=\ip{C_0}{D_0}+2u\cdot v+\frac{n}{n-1}ab,\\
 \ip{Q_xH}{K}
 &=n(n+2)(x^THx)(x^TKx)-2n(Hx)\cdot(Kx)\\
 &=n(n+2)ab-2n(ab+u\cdot v)\\
 &=n^2ab-2n\,u\cdot v.
 \end{aligned}
\]
The eigenvalues are obtained by comparing the coefficients in
\(\ip{Q_xH}{K}\) with those in \(\ip{H}{K}\). On \(V_u\),
the coefficient of \(u\cdot v\) changes from \(2\) to \(-2n\);
on \(V_a\), the coefficient of \(ab\) changes from \(n/(n-1)\)
to \(n^2\). Thus \(Q_x\) acts on \(V_0,V_u,V_a\) by
multiplication by, respectively,
\[
 0,\qquad \frac{-2n}{2}=-n,\qquad
 \frac{n^2}{n/(n-1)}=n(n-1).
\]
To apply the spectral formula for \(\Psi\), we use an orthonormal
basis \(T_1,\dots,T_{n-1}\) of \(V_u\) and a matrix \(H_0\)
spanning \(V_a\) with \(\|H_0\|_F=1\). They are given by
\[
 T_i=\frac{e_ix^T+xe_i^T}{\sqrt2},\qquad
 H_0=\frac{nP-I}{\sqrt{n(n-1)}}.
\]
Using \(e_i\perp x\), \(\sum_i e_ie_i^T=I-P\), and \(P^2=P\),
direct matrix multiplication gives
\[
 \begin{aligned}
 T_i^2&=\tfrac12(P+e_ie_i^T),\\
 \sum_iT_i^2&=\tfrac12((n-1)P+I-P)
             =\tfrac12(I+(n-2)P),\\
 n(n-1)H_0^2&=(nP-I)^2=I+n(n-2)P.
 \end{aligned}
\]
The spectral formula from Lemma~\ref{lem:positive-contraction} now yields
\[
 \begin{aligned}
 \Psi(Q_x)&=n(n-1)H_0^2-n\sum_iT_i^2\\
 &=I+n(n-2)P-\tfrac n2(I+(n-2)P)\\
 &=\frac{n-2}{2}(nP-I).
 \end{aligned}
\]
This proves the first identity. Since \(\tr P=1\), the resulting
matrix has trace zero. Lemma~\ref{lem:positive-contraction} then gives
\(\tr_VQ_x=\tr\Psi(Q_x)=0\).
\end{proof}

The pointwise identity becomes a statement about set measures by
integration. At \(s=0\), the second derivative of \(\mu(e^{sH}E)\)
is \(\ip{(\int_E Q_x\,d\mu(x))H}{H}\), while
\(\int_E(nxx^T-I)\,d\mu(x)=-G_E\). Linearity therefore gives the
identity we will use in the next arguments.

\begin{proposition}\label{lem:measure-contraction}
For every Borel set \(E\subset M\),
\begin{equation}\label{eq:measure-contraction}
 \Psi\left(\int_E Q_x\,d\mu(x)\right)=-\frac{n-2}{2}G_E.
\end{equation}
\end{proposition}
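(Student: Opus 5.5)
The plan is to integrate the pointwise identity of Lemma~\ref{prop:contraction} over \(E\), using that \(\Psi\) is linear and commutes with integration.

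First, make sense of \(\int_E Q_x\,d\mu(x)\). Fix the orthonormal basis \(H_1,\dots,H_d\) of \(V\). The coefficients
\[
(Q_x)_{ij}=n(n+2)(x^TH_ix)(x^TH_jx)-2n(H_ix)\cdot(H_jx)
\]
are polynomials in \(x\), and they are even, so they descend to continuous bounded functions on \(M\). The integral is therefore the self-adjoint operator \(Q_E\) with coefficients \((Q_E)_{ij}=\int_E(Q_x)_{ij}\,d\mu(x)\). The same coefficient description gives \(\ip{Q_EH}{H}=\int_E J''_0\,d\mu\). This is the second variation of \(\mu(e^{sH}E)\), by Lemma~\ref{lem:analytic-variations}, although the proposition does not need this fact.

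Second, use linearity. By \eqref{eq:Psi}, \(\Psi(Q)\) is a fixed linear combination of the matrices \((H_iH_j+H_jH_i)/2\), with the coefficients \(Q_{ij}\) as weights. Interchanging the finite sum with the integral therefore gives
\[
\Psi(Q_E)=\int_E\Psi(Q_x)\,d\mu(x),
\]
where the right side is taken entrywise.

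Third, substitute \(\Psi(Q_x)=\tfrac{n-2}{2}(nxx^T-I)\) from Lemma~\ref{prop:contraction}. By \eqref{eq:moments},
\[
\int_E(nxx^T-I)\,d\mu=nS_E-\mu(E)I=-G_E,
\]
which yields \eqref{eq:measure-contraction}.

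There is no real obstacle. The only point needing care is that the integral of the operator-valued function is well defined and that \(\Psi\) passes through it. Both follow from the finite-dimensional coefficient representation and the boundedness of the integrands on the compact space \(M\).
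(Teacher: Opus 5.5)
Your proposal is correct and follows the paper's own argument: integrate the pointwise identity of Lemma~\ref{prop:contraction} over \(E\), pass \(\Psi\) through the integral by linearity, and use \(\int_E(nxx^T-I)\,d\mu=nS_E-\mu(E)I=-G_E\). Your remarks on the evenness and boundedness of the coefficients \((Q_x)_{ij}\) only spell out the well-definedness that the paper leaves implicit.
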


\subsection{Nonisotropy of optimizers}\label{sec:nonisotropy}
The contraction identity is useful for the curvature estimate only
if \(G_A\ne0\). When \(G_A=0\), every first measure derivative
vanishes and the eventual matrix inequality is trivial. We now
exclude this possibility for optimizers at second-order points,
using rotation averaging and the first nonzero Taylor term of the
measure variation.

\begin{proposition}\label{prop:nonisotropy}
At every second-order point \(p\in(0,1)\) of \(f\), every Borel
optimizer \(A\) satisfies \(G_A\ne0\).
\end{proposition}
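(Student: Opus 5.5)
We argue by contradiction, following the outline in the proof overview. Suppose \(A\) is a Borel optimizer at a second-order point \(p\in(0,1)\) with \(G_A=0\). Take an optimal avoiding pair \((A,B)\); such a \(B\) exists by trimming \(M\setminus N(A)\), and \(\mu(B)=1-f(p)\). Put \(\phi(H)=a(e^H)-p\) and \(\psi(H)=b(e^H)-(1-f(p))\) for \(H\in V\). Both are real analytic by Lemma~\ref{lem:analytic-variations}. By Lemma~\ref{lem:nonconstancy}, \(\phi\) is not identically zero near \(0\). Since \(G_A=0\), its linear term vanishes. Let \(m\ge2\) be the degree of the first nonzero homogeneous Taylor term \(\phi_m\) of \(\phi\), so that \(\phi=\phi_m+O(|H|^{m+1})\).

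Consider the signed displacement
\[D(H)=\psi(H)+f'(p)\phi(H).\]
It measures the position of \((a,b)\) relative to the tangent line of \(b=1-f(a)\) at \((p,1-f(p))\). Lemma~\ref{lem:feasibility} gives \(\psi\le f(p)-f(p+\phi)\). The Peano expansion \eqref{eq:Peano} then gives
\[D(H)\le-\tfrac12f''(p)\phi^2+o(\phi^2)=O(|H|^{2m}),\]
an upper bound of order \(2m\). Next average over conjugations. The pair \((e^{RHR^{-1}}A,e^{-RHR^{-1}}B)\) corresponds to \(e^{\mathrm{Ad}_R H}\), and Lemma~\ref{lem:rotation-average} shows that \(\int_{SO(n)}D(\mathrm{Ad}_RH)\,dR=0\) for every \(H\). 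Each homogeneous Taylor term \(D_k\) of \(D\) therefore has zero rotation average. Now argue inductively on \(k<2m\). Suppose all lower terms vanish. If \(D_k\) were not identically zero, it would be positive somewhere, because its average is zero. Along that ray we would have \(D\approx s^kD_k>0\), contradicting \(D=O(s^{2m})\). Hence \(D=O(|H|^{2m})\) as a two-sided bound, that is, \(\psi=-f'(p)\phi+O(|H|^{2m})\).

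Now choose \(H_0\) with \(\phi_m(H_0)\neq0\), and study the second variation along the curve \(s\mapsto e^{sH_0}\), or more usefully along nearby base points \(T=e^{sH_0}\). At such a base point, the first derivatives of \(a\) and \(b\) in a direction \(K\) are \(\ip{G_{TA}}{K}\) and \(-\ip{G_{T^{-T}B}}{K}\). The second derivatives are represented by \(\int_{TA}Q_x\) and \(\int_{T^{-T}B}Q_x\). The key identity comes from applying \(\Psi\) through Proposition~\ref{lem:measure-contraction}. The quadratic form \(K\mapsto\) second derivative of \(\psi+f'(p)\phi\) at \(T\), in directions \(K\), is controlled by the expansion of \(D\) to order \(2m-2\). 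Taking \(\Psi\), and using that \(\Psi\) sends each measure Hessian to \(-\tfrac{n-2}{2}G_E\), converts this into
\[G_{T^{-T}B}-f'(p)G_{TA}=O(s^{2m-2}),\]
up to sign conventions. Combined with the first-order relation, this bounds \(\tfrac{d}{ds}\phi(sH_0)=\ip{G_{e^{sH_0}A}}{H_0}\) by \(O(s^{2m-2})\). Integrating from \(0\) gives \(\phi(sH_0)=O(s^{2m-1})\). Since \(2m-1>m\), this contradicts \(\phi(sH_0)=s^m\phi_m(H_0)+O(s^{m+1})\) with \(\phi_m(H_0)\ne0\).

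\textbf{Main obstacle.} The delicate step is the last one. One must extract, from the two-sided bound \(D=O(|H|^{2m})\), a bound of order \(2m-2\) on the measure derivative \(\ip{G_{TA}}{H_0}\) itself, rather than on the combination \(D\). Doing this requires the contraction identity to decouple \(G_{TA}\) from \(G_{T^{-T}B}\), presumably by using \(\tr_VQ_x=0\) and the trace-preservation of \(\Psi\) to kill one combination. I would first try to apply \(\Psi\) to the Hessian of \(D\) at the base point \(e^{sH_0}\). That Hessian is \(O(s^{2m-2})\) because the Taylor coefficients of \(D\) vanish below degree \(2m\), and analyticity makes the Hessian at nearby points inherit this. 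I would then read off \(\Psi(\text{Hess}\,D)=-\tfrac{n-2}{2}\bigl(f'(p)G_{e^{sH_0}A}-G_{e^{-sH_0}B}\bigr)\), using the fact that the Hessian of \(b\) at \(T\) is represented through \(\int_{T^{-T}B}Q_x\) after translating by \(T\). The remaining task is to match this with the first-order expansion of \(D\), whose gradient is \(\ip{f'(p)G_{TA}-G_{T^{-T}B}}{\cdot}\). Together these pin down \(G_{TA}\) separately and yield the \(O(s^{2m-2})\) estimate.
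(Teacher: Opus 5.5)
Your setup and first three steps are the paper's argument. Your \(D\) is the paper's \(v(H)=u(e^H)\). The one-sided bound of order \(2m\), the vanishing conjugation averages and the resulting two-sided estimate \(D=O(\|H\|_F^{2m})\) are all correct.

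\textbf{The decoupling step.} You rightly call this the crux, but as written it fails. You write \(\Psi(\mathrm{Hess})=-\tfrac{n-2}{2}\bigl(f'(p)G_{TA}-G_{T^{-T}B}\bigr)\). That is a multiple of the same combination the gradient \(f'(p)G_{TA}-G_{T^{-T}B}\) already controls. Two estimates on one combination say nothing about \(G_{TA}\) alone, since both moment matrices could be large and cancel. The identity \(\tr_VQ_x=0\) cannot help either: it only records that every \(G_E\) is trace-free.

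The sign is what makes the argument work. In the directions \(e^{\sigma K}T\) one has \((e^{\sigma K}T)^{-T}=e^{-\sigma K}T^{-T}\). So the first variation of \(b\), namely \(-\ip{G_{T^{-T}B}}{K}\), is odd in \(K\). Its second variation \(\ip{(\int_{T^{-T}B}Q_x\,d\mu)K}{K}\) is even in \(K\). Your own second-variation formulas therefore give the Hessian \(\mathcal Q=\int_{T^{-T}B}Q_x\,d\mu+f'(p)\int_{TA}Q_x\,d\mu\). Proposition~\ref{lem:measure-contraction} then yields \(\Psi(\mathcal Q)=-\tfrac{n-2}{2}\bigl(G_{T^{-T}B}+f'(p)G_{TA}\bigr)\), a \emph{sum}. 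Write \(\Delta\) for the gradient. Adding \(\tfrac{n-2}{2}\Delta\) gives \((n-2)f'(p)G_{TA}=\tfrac{n-2}{2}\Delta-\Psi(\mathcal Q)\), which is \eqref{eq:affine-contraction}. With \(\Delta=O(s^{2m-1})\) and \(\mathcal Q=O(s^{2m-2})\), this gives \(G_{TA}=O(s^{2m-2})\).

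\textbf{The transfer of the estimate.} You assert the bounds \(\Delta=O(s^{2m-1})\) and \(\mathcal Q=O(s^{2m-2})\) without proof. They concern derivatives along \(e^{\sigma K}e^{sH_0}\), because only these are represented by \(G_{TA}\) and \(\int_{TA}Q_x\,d\mu\). But \(e^{\sigma K}e^{sH_0}\) is not a symmetric exponential. It leaves the chart \(H\mapsto e^H\), \(H\in V\), on which you proved \(D=O(\|H\|_F^{2m})\), so analyticity of \(D\) on \(V\) alone does not transfer the estimate.

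The paper closes this with rotation invariance (Lemma~\ref{lem:rotation-invariance}) and the polar decomposition \(e^Ke^{tZ}=Re^{K_t(K)}\), where \(K_t(K)=\tfrac12\log(e^{tZ}e^{2K}e^{tZ})\in V\) is analytic with \(K_t(0)=tZ\). Then \(u(e^Ke^{tZ})=v(K_t(K))\). Apply the chain rule at \(K=0\), using that every Taylor term of \(v\) has degree at least \(2m\) and that \(K_t\) has bounded first two derivatives; this gives both bounds. With these two corrections your proof becomes the paper's.
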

\begin{proof}
Suppose that \(G_A=0\). Choose a full-measure Borel subset
\(B\subset M\setminus N(A)\), so that \((A,B)\) is an optimal
avoiding pair, and use the measure functions \(a,b\) from
\eqref{eq:ab}. Thus \(a(I)=p\) and \(b(I)=1-f(p)\).
Recall that \(f'(p)>0\).

The assumption \(G_A=0\) says that \(a\) has no first-order
change under symmetric deformations. However, by
Lemmas~\ref{lem:analytic-variations} and~\ref{lem:nonconstancy},
\(H\mapsto a(e^H)-p\) is analytic and is not identically zero
near \(0\in V\). It therefore has a first nonzero homogeneous
Taylor term of some degree \(m\ge2\):
\[
 a(e^H)-p=P_m(H)+O(\|H\|_F^{m+1}),\qquad P_m\not\equiv0.
\]
We will show that optimality forces this first nonzero term to
vanish as well.

To use optimality, we compare the changes in the two measures.
Feasibility gives \(b(T)\le1-f(a(T))\): the point
\((a(T),b(T))\) lies on or below the graph \(y=1-f(x)\).
At \((p,1-f(p))\), this graph has tangent line
\(y=1-f(p)-f'(p)(x-p)\). We subtract its height at
\(x=a(T)\) from \(b(T)\), defining
\[
 u(T)=b(T)-(1-f(p))+f'(p)(a(T)-p),\qquad v(H)=u(e^H).
\]
Since \(p\) is a second-order point,
\(f(p+\delta)=f(p)+f'(p)\delta+\tfrac12f''(p)\delta^2+o(\delta^2)\).
With \(\delta=a(T)-p\), this expansion and the degree-\(m\)
estimate for \(a(e^H)-p\) give, for some \(C,C'>0\),
\begin{equation}\label{eq:analytic-upper}
 \begin{aligned}
 u(T)&\le f(p)+f'(p)\delta-f(p+\delta)\le C\delta^2,\\
 v(H)&\le C'\|H\|_F^{2m}
 \end{aligned}
\end{equation}
near \(T=I\) and \(H=0\), respectively.

The upper bound forces the first nonzero homogeneous Taylor term
of \(v\), if its degree is less than \(2m\), to be nonpositive.
We use rotation averaging from Lemma~\ref{lem:rotation-average}
to show that this term must vanish. For \(H\in V\), we have
\(e^H\in SL(n,\R)\) and \(e^{RHR^{-1}}=Re^HR^{-1}\).
Expanding \(u\), applying the lemma with \(T=e^H\), and using
\(\int_{SO(n)}dR=1\), \(\mu(A)=p\), and \(\mu(B)=1-f(p)\),
we obtain
\[
 \begin{aligned}
 \int_{SO(n)}v(RHR^{-1})\,dR
 &=\int_{SO(n)}u(Re^HR^{-1})\,dR\\
 &=\mu(B)-(1-f(p))+f'(p)(\mu(A)-p)\\
 &=(1-f(p))-(1-f(p))+f'(p)(p-p)\\
 &=0.
 \end{aligned}
\]
If \(U_k\) were the first nonzero homogeneous Taylor term of
\(v\), with \(k<2m\), applying \eqref{eq:analytic-upper} to
\(tH\), dividing by \(t^k\), and letting \(t\downarrow0\)
would give \(U_k(H)\le0\). Applying the same procedure to the
rotation average gives
\[
 \int_{SO(n)}U_k(RHR^{-1})\,dR=0.
\]
The Taylor remainder is uniform on the compact rotation orbit of
\(H\), which justifies passing the limit under the integral.
The integrand is continuous and nonpositive. If it were negative
at any rotation, it would be negative on an open set of positive
Haar measure, contradicting its zero integral. Taking \(R=I\)
therefore gives \(U_k(H)=0\) for every \(H\), a contradiction.
We have proved
\begin{equation}\label{eq:analytic-vanishing}
 v(H)=O(\|H\|_F^{2m}).
\end{equation}

We now return to the measure of \(A\). Choose \(Z\in V\) with
\(\|Z\|_F=1\) and \(P_m(Z)\ne0\), and follow the deformation
\(A_t=e^{tZ}A\), paired with \(B_t=e^{-tZ}B\). Along this path,
\(a(e^{tZ})-p=t^mP_m(Z)+O(t^{m+1})\). To contradict this
expansion, we will use \eqref{eq:analytic-vanishing} to show that
its derivative is \(O(t^{2m-2})\); integration will then give
order \(2m-1>m\).

The contraction identity provides the link: it recovers the first
measure derivative from the first two derivatives of \(u\).
To apply it, we vary the pair \((A_t,B_t)\) in all matrix directions
\(H\in V\), while keeping its base point \(t\) fixed.
Let \(D(t)\in V\) and the self-adjoint
operator \(Q(t):V\to V\) be the gradient and Hessian at \(H=0\)
of \(H\mapsto u(e^He^{tZ})\), using the Frobenius inner product.
Thus, for every \(H\in V\),
\[
 \begin{aligned}
 \left.\frac{d}{ds}u(e^{sH}e^{tZ})\right|_{s=0}
 &=\ip{D(t)}{H},\\
 \left.\frac{d^2}{ds^2}u(e^{sH}e^{tZ})\right|_{s=0}
 &=\ip{Q(t)H}{H}.
 \end{aligned}
\]
With \(t\) and \(H\) fixed, varying \(s\) gives
\(a(e^{sH}e^{tZ})=\mu(e^{sH}A_t)\) and
\(b(e^{sH}e^{tZ})=\mu(e^{-sH}B_t)\), since
\((e^{sH}e^{tZ})^{-T}=e^{-sH}e^{-tZ}\).
The derivatives defining \(D(t)\) and \(Q(t)\) are therefore
the first and second derivatives at \(s=0\) of
\(\mu(e^{-sH}B_t)+f'(p)\mu(e^{sH}A_t)\); the remaining terms
in \(u\) are constant in \(s\). For the \(B_t\) term, replacing
\(H\) by \(-H\) reverses the first measure derivative and leaves
the second unchanged. The variation formulas give
\[
 \begin{aligned}
 D(t)&=f'(p) G_{A_t}-G_{B_t},\\
 Q(t)&=\int_{B_t}Q_x\,d\mu(x)+f'(p)\int_{A_t}Q_x\,d\mu(x).
 \end{aligned}
\]
The second identity follows from the directional second derivatives
for all \(H\), which determine a self-adjoint operator.
Proposition~\ref{lem:measure-contraction} gives
\(\Psi(Q(t))=-\tfrac{n-2}{2}(G_{B_t}+f'(p) G_{A_t})\).
Eliminating \(G_{B_t}\) with the formula for \(D(t)\) yields
\begin{equation}\label{eq:affine-contraction}
 (n-2)f'(p) G_{A_t}=\frac{n-2}{2}D(t)-\Psi(Q(t)).
\end{equation}
Since \((n-2)f'(p)>0\) and \(\Psi\) is a fixed linear map,
bounds on \(D(t)\) and \(Q(t)\) give a bound on \(G_{A_t}\),
and hence on \(\frac{d}{dt}a(e^{tZ})=\ip{G_{A_t}}{Z}\).
In particular, bounds of order \(O(|t|^{2m-2})\) suffice:
integrating the derivative would then give
\(a(e^{tZ})-p=O(t^{2m-1})\) for \(t>0\), contradicting its
nonzero degree-\(m\) Taylor term.

It remains to estimate \(D(t)\) and \(Q(t)\) from the vanishing
of \(v\). They differentiate \(u(e^He^{tZ})\), and the product
\(e^He^{tZ}\) need not be symmetric. Polar decomposition lets
us express it using the symmetric deformations already controlled
by \eqref{eq:analytic-vanishing}. Namely,
\(e^He^{tZ}=Re^{K_t(H)}\), where \(R\in SO(n)\) and
\(K_t(H)=\tfrac12\log(e^{tZ}e^{2H}e^{tZ})\in V\).
The matrix inside the logarithm is positive definite with
determinant one, so its logarithm is symmetric and trace-free.
By rotation invariance (Lemma~\ref{lem:rotation-invariance}),
\(u(e^He^{tZ})=v(K_t(H))\).

The logarithm power series shows that \(K_t(H)\) is analytic
near \((t,H)=(0,0)\); in particular, its first two derivatives
in \(H\) are uniformly bounded there. Also \(K_t(0)=tZ\).
Lemma~\ref{lem:analytic-variations} gives the analyticity of
\(v(H)=u(e^H)\), and \eqref{eq:analytic-vanishing} shows that
every Taylor term of \(v\) has degree at least \(2m\).
Differentiating its convergent Taylor series therefore gives
orders \(O(|t|^{2m-1})\) and \(O(|t|^{2m-2})\) for its first
and second derivatives at \(K_t(0)=tZ\), respectively.

Recall that \(D(t)\) and \(Q(t)\) were defined as the gradient
and Hessian of \(H\mapsto u(e^He^{tZ})\) at \(H=0\).
The identity \(u(e^He^{tZ})=v(K_t(H))\), obtained above from
Lemma~\ref{lem:rotation-invariance}, lets us apply the chain rule:
the first derivative uses the first derivative of \(v\), while
the second uses its first and second derivatives. Combining the
Taylor estimates just obtained with the uniform bounds on the
first two derivatives of \(K_t\) established above gives
\[
 D(t)=O(|t|^{2m-1}),\qquad Q(t)=O(|t|^{2m-2}).
\]
The estimate for \(D(t)=f'(p)G_{A_t}-G_{B_t}\) controls a
difference, whose two terms may cancel. To bound \(G_{A_t}\)
separately, we use \eqref{eq:affine-contraction}, which also
involves \(Q(t)\). Since \(\Psi\) is a fixed linear map and
\((n-2)f'(p)>0\), for sufficiently small \(0<|t|\le1\) we obtain
\[
 \begin{aligned}
 \|G_{A_t}\|_F
 &\le \frac{\|D(t)\|_F}{2f'(p)}
       +\frac{\|\Psi(Q(t))\|_F}{(n-2)f'(p)}\\
 &\le C_1|t|^{2m-1}+C_2|t|^{2m-2}\\
 &\le (C_1+C_2)|t|^{2m-2}.
 \end{aligned}
\]
Thus the \(Q(t)\) term gives the bound
\(G_{A_t}=O(|t|^{2m-2})\); the faster decay of \(D(t)\)
does not improve this estimate.

We have obtained the desired derivative bound. For small \(t>0\),
integrating along the chosen path and using \(\|Z\|_F=1\) gives
\[
 \begin{aligned}
 a(e^{tZ})-p&=\int_0^t\ip{G_{A_s}}{Z}\,ds,\\
 |a(e^{tZ})-p|&\le C\int_0^t s^{2m-2}\,ds
                =\frac{C}{2m-1}t^{2m-1}.
 \end{aligned}
\]
Dividing by \(t^m\) and letting \(t\downarrow0\) gives zero,
since \(2m-1>m\), whereas the Taylor expansion gives the nonzero
limit \(P_m(Z)\). This contradiction proves \(G_A\ne0\).
\end{proof}

\subsection{Differential comparison with the cap profile}\label{sec:differential}
We now compare the slopes of \(f\) and the cap profile \(g_n\).
Optimality first gives a matrix inequality. The nonzero moment matrix
from Proposition~\ref{prop:nonisotropy} then supplies a negative
eigenvalue, which we bound by comparing directional moments with
those of a cap.

\begin{proposition}\label{prop:differential}
At every second-order point \(p\in(0,1)\) of \(f\),
\[
 \frac{f''(p)}{f'(p)}\le\frac{g_n''(p)}{g_n'(p)}.
\]
In particular, this inequality holds almost everywhere in \((0,1)\).
\end{proposition}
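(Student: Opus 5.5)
The plan is to turn second-order optimality of an optimal pair into the matrix inequality \(-f''(p)G_A^2+(n-2)f'(p)G_A\succeq0\) shown in Figure~\ref{fig:proof}. We then test it on an eigenvector of a negative eigenvalue of \(G_A\) and bound that eigenvalue by the cap moment bound.

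\emph{Step 1: first and second variations.} Fix a second-order point \(p\in(0,1)\). By Proposition~\ref{prop:attainment} take an optimal avoiding pair \((A,B)\), and put \(Q_E=\int_EQ_x\,d\mu(x)\).

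For \(H\in V\), set \(\phi(s)=b(e^{sH})+f(a(e^{sH}))\). By Lemma~\ref{lem:feasibility}, \(\phi\le1\) with equality at \(s=0\). Since \(a\) is analytic (Lemma~\ref{lem:analytic-variations}) and \(f\) has the Peano expansion \eqref{eq:Peano} at \(p\), \(\phi\) has a second-order expansion at \(0\) with coefficients
\[
 \phi'(0)=\ip{f'(p)G_A-G_B}{H},\qquad
 \phi''(0)=\ip{Q_BH}{H}+f'(p)\ip{Q_AH}{H}+f''(p)\ip{G_A}{H}^2 .
\]
Here the minus sign in \(\phi'(0)\) comes from deforming \(B\) by \(e^{-sH}\), and the \(Q_B\) term is even in \(H\). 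A maximum at \(s=0\) forces \(\phi'(0)=0\) for all \(H\), so \(G_B=f'(p)G_A\). It also forces \(\phi''(0)\le0\). Therefore the operator
\[
 W=-Q_B-f'(p)Q_A-f''(p)\,G_A\otimes G_A,\qquad (G\otimes G)H=\ip{G}{H}G,
\]
satisfies \(W\succeq0\).

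\emph{Step 2: contraction.} Apply \(\Psi\), using Lemma~\ref{lem:positive-contraction}. The operator \(G\otimes G\) has the single nonzero eigenvalue \(\|G\|_F^2\), with unit eigenvector \(G/\|G\|_F\). The spectral formula for \(\Psi\) therefore gives \(\Psi(G\otimes G)=G^2\). Proposition~\ref{lem:measure-contraction} gives \(\Psi(Q_A)=-\tfrac{n-2}{2}G_A\) and \(\Psi(Q_B)=-\tfrac{n-2}{2}G_B\). Substituting \(G_B=f'(p)G_A\),
\[
 0\preceq\Psi(W)=\tfrac{n-2}{2}\bigl(G_B+f'(p)G_A\bigr)-f''(p)G_A^2=(n-2)f'(p)G_A-f''(p)G_A^2 .
\]

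\emph{Step 3: scalar bound.} By Proposition~\ref{prop:nonisotropy}, \(G_A\ne0\). Since \(G_A\) is trace-free, it has an eigenvalue \(\lambda<0\) with unit eigenvector \(z\). Testing the matrix inequality on \(z\) gives \(-f''\lambda^2+(n-2)f'\lambda\ge0\). Dividing by \(\lambda^2f'>0\) yields
\[
 \frac{f''(p)}{f'(p)}\le\frac{n-2}{\lambda}=-\frac{n-2}{|\lambda|}.
\]
Now \(|\lambda|=n\int_A(x\cdot z)^2\,d\mu-p\). Among sets of measure \(p\), the axial moment \(\int_A(x\cdot z)^2\,d\mu\) is largest for the cap \(\cC_r(z)\) with \(A_n(r)=p\). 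This is the bathtub principle applied to the latitude density \eqref{eq:latitude}. Hence
\[
 |\lambda|\le c_n\int_0^r\bigl(n\cos^2\theta-1\bigr)\sin^{n-2}\theta\,d\theta=c_n\sin^{n-1}r\cos r .
\]
The evaluation holds because \(\tfrac{d}{d\theta}\bigl(\sin^{n-1}\theta\cos\theta\bigr)=\sin^{n-2}\theta\,(n\cos^2\theta-1)\). Since \(\lambda<0\), the bound \(|\lambda|\le c_n\sin^{n-1}r\cos r\) gives \(f''/f'\le-(n-2)/(c_n\sin^{n-1}r\cos r)\).

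\emph{Step 4: the cap profile.} Differentiate \(g_n=B_n\circ A_n^{-1}\). This gives \(g_n'(p)=\cot^{n-2}r\). Differentiating again and dividing by \(A_n'(r)=c_n\sin^{n-2}r\) gives
\[
 \frac{g_n''(p)}{g_n'(p)}=-\frac{n-2}{c_n\sin^{n-1}r\cos r}.
\]
This matches the bound from Step~3 exactly and proves the inequality. The almost-everywhere statement follows because second-order points have full measure, by Proposition~\ref{prop:concavity} and \eqref{eq:Peano}.

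The main obstacle is Step~1: justifying the second-order condition when \(f\) is only Peano-differentiable at \(p\). The plan is to compose \eqref{eq:Peano} with the analytic function \(a(e^{sH})-p=O(s)\). This gives
\[
 f(a)=f(p)+f'(p)(a-p)+\tfrac12f''(p)(a-p)^2+o(s^2),
\]
which is enough to read off \(\phi'(0)\) and \(\phi''(0)\) from the maximum at \(s=0\).
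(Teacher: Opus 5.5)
Your proposal is correct and follows the paper's proof essentially step for step. It uses the same second-order expansion of $b(e^{sH})+f(a(e^{sH}))$ giving $G_B=f'(p)G_A$ and a positive semidefinite operator on $V$, the same application of $\Psi$ yielding $-f''(p)G_A^2+(n-2)f'(p)G_A\succeq0$, and the same negative-eigenvector test with the bathtub moment bound and the identity $g_n''/g_n'=-(n-2)/(c_n\sin^{n-1}r\cos r)$; your treatment of the Peano remainder through $a(e^{sH})-p=O(s)$ is also the paper's justification.
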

\begin{proof}
Fix such a point \(p\) and choose an optimal avoiding pair \((A,B)\)
with \(\mu(A)=p\), by Proposition~\ref{prop:attainment}.
Write \(G=G_A\).
Proposition~\ref{prop:concavity} gives \(f'(p)>0\) and
\(f''(p)\le0\), while Proposition~\ref{prop:nonisotropy} gives
\(G\ne0\).

For every \(H\in V\), feasibility (Lemma~\ref{lem:feasibility})
gives \(\mu(e^{-sH}B)+f(\mu(e^{sH}A))\le1\), with equality at
\(s=0\). To expand this expression, collect the second measure
variations in the self-adjoint operator
\[
 Q=\int_B Q_x\,d\mu(x)+f'(p)\int_A Q_x\,d\mu(x).
\]
The variation formulas from Lemma~\ref{lem:analytic-variations}
and the Peano expansion \eqref{eq:Peano} give
\[
 \begin{aligned}
 &\mu(e^{-sH}B)+f(\mu(e^{sH}A))\\
 &\quad=1+s\ip{f'(p)G-G_B}{H}\\
 &\qquad+\frac{s^2}{2}
   \bigl(\ip{QH}{H}+f''(p)\ip{G}{H}^2\bigr)+o(s^2).
 \end{aligned}
\]
The remainder is \(o(s^2)\) because
\(\mu(e^{sH}A)-p=O(s)\), including when its first derivative
vanishes. Since the expression is at most one for both signs of
\(s\), the linear coefficient must vanish and the quadratic
coefficient must be nonpositive. Define the self-adjoint operator
\(L\) by \(LH=-f''(p)\ip{G}{H}G-QH\), so that the coefficient
of \(s^2/2\) is \(-\ip{LH}{H}\). We have proved
\begin{equation}\label{eq:stationarity}
 G_B=f'(p)G,\qquad L\succeq0.
\end{equation}

We apply \(\Psi\) to turn this positivity on \(V\) into an
inequality for \(n\times n\) matrices. Proposition~\ref{lem:measure-contraction}
and \eqref{eq:stationarity} give
\[
 \Psi(Q)=-\frac{n-2}{2}\bigl(G_B+f'(p)G\bigr)
        =-(n-2)f'(p)G.
\]
In the orthonormal basis used in \eqref{eq:Psi},
\(L_{ij}=-f''(p)\ip{G}{H_i}\ip{G}{H_j}-Q_{ij}\).
Substituting these coefficients and using
\(G=\sum_i\ip{G}{H_i}H_i\), we obtain
\[
 \begin{aligned}
 \Psi(L)
 &=-f''(p)\left(\sum_i\ip{G}{H_i}H_i\right)^2-\Psi(Q)\\
 &=-f''(p)G^2-\Psi(Q).
 \end{aligned}
\]
Lemma~\ref{lem:positive-contraction} gives \(\Psi(L)\succeq0\).
Consequently,
\begin{equation}\label{eq:matrix}
 -f''(p)G^2+(n-2)f'(p)G\succeq0.
\end{equation}

Since \(G\ne0\) is symmetric and its eigenvalues sum to zero,
it has a negative eigenvalue \(-\beta\), with \(\beta>0\).
Choose a unit eigenvector \(z\), so \(Gz=-\beta z\).
Evaluating \eqref{eq:matrix} on \(z\) gives
\(0\le-f''(p)\beta^2-(n-2)f'(p)\beta\).
Together with the definition of \(G_A\) in \eqref{eq:moments},
this yields
\begin{equation}\label{eq:rho}
 \frac{f''(p)}{f'(p)}\le-\frac{n-2}{\beta},\qquad
 \beta=n\int_A(x\cdot z)^2\,d\mu(x)-p.
\end{equation}
To obtain a bound depending only on \(p\), we therefore need an
upper bound for \(\beta\), or equivalently for the second moment
of \(A\) in direction \(z\).

Among sets of measure \(p\), the cap about \(z\) selects the
largest values of \((x\cdot z)^2\). Put \(r=A_n^{-1}(p)\),
\(u=\cos r\), and \(C=\{x:|x\cdot z|\ge u\}\).
The pointwise inequality
\(((x\cdot z)^2-u^2)(\one_A-\one_C)\le0\)
expresses this choice: the first factor is nonnegative on \(C\)
and negative outside it. On integration, the constant term cancels
because \(\mu(A)=\mu(C)=p\). Using the latitude density
\eqref{eq:latitude}, we obtain
\begin{equation}\label{eq:moment-bound}
 \int_A(x\cdot z)^2\,d\mu(x)
 \le\int_C(x\cdot z)^2\,d\mu(x)
 =\int_u^1t^2w_n(t)\,dt.
\end{equation}
Equality holds exactly when \(A=C\) almost everywhere: the
pointwise product is strictly negative on their symmetric difference
except on the null latitude \(|x\cdot z|=u\).
This is the fixed-measure form of the bathtub principle
\cite[Theorem 1.14]{LiebLoss}.

Since \(p=\int_u^1w_n(t)\,dt\), equations \eqref{eq:rho}
and \eqref{eq:moment-bound} give
\begin{equation}\label{eq:D}
 \begin{aligned}
 \beta\le D_n(p)
 &:=\int_u^1(nt^2-1)w_n(t)\,dt\\
 &=-c_n\bigl[t(1-t^2)^{(n-1)/2}\bigr]_u^1\\
 &=c_nu(1-u^2)^{(n-1)/2}>0.
 \end{aligned}
\end{equation}
Here \(0<u<1\), since \(0<p<1\). Substituting this upper bound
for \(\beta\) in \eqref{eq:rho} gives
\begin{equation}\label{eq:ODE}
 \frac{f''(p)}{f'(p)}
 \le-\frac{n-2}{\beta}
 \le-\frac{n-2}{D_n(p)}.
\end{equation}

It remains to identify the right-hand side with the cap profile.
By \eqref{eq:angular}, \(A_n'(r)=c_n\sin^{n-2}r\) and
\(B_n'(r)=c_n\cos^{n-2}r\). Differentiate
\(g_n(A_n(r))=B_n(r)\), then differentiate the logarithm of the
resulting slope. Since \(p=A_n(r)\), differentiation with respect
to \(p\) is differentiation with respect to \(r\) divided by
\(A_n'(r)\). Thus
\begin{equation}\label{eq:cap-ODE}
 \begin{aligned}
 g_n'(p)&=\frac{B_n'(r)}{A_n'(r)}=\cot^{n-2}r>0,\\
 \frac{g_n''(p)}{g_n'(p)}
 &=\frac{1}{A_n'(r)}\frac{d}{dr}\log\bigl(\cot^{n-2}r\bigr)\\
 &=-\frac{n-2}{c_n\cos r\,\sin^{n-1}r}
  =-\frac{n-2}{D_n(p)}.
 \end{aligned}
\end{equation}
Combining \eqref{eq:ODE} and \eqref{eq:cap-ODE} proves the claim.
\end{proof}

\subsection{Identification of the profile and rigidity}\label{sec:comparison}
\label{sec:representatives}
We complete the proof by integrating the differential comparison,
identifying the Borel optimizers, and passing to completed-measurable
sets. The last step will use a Borel subset of full measure for both
the inequality and its rigidity.

\begin{proof}[Proof of Theorem~\ref{thm:shadow}]
Proposition~\ref{prop:concavity} gives a positive, nonincreasing
right derivative \(v=f'_+\). On each compact subinterval of
\((0,1)\), it is bounded above and away from zero, so \(\log v\)
is also locally bounded and nonincreasing. Proposition~\ref{prop:differential}
gives
\[
 (\log v)'=\frac{f''}{v}\le\frac{g_n''}{g_n'}
 \quad\text{almost everywhere}.
\]
The function \(\log v\) need not be absolutely continuous: its
almost-everywhere derivative may miss jumps or singular decreases.
We therefore use the monotone integral inequality
\eqref{eq:monotone-integral}. For \(0<s<t<1\), it gives
\begin{equation}\label{eq:slope-comparison}
 \begin{aligned}
 \log v(t)-\log v(s)
 &\le\int_s^t(\log v)'(p)\,dp\\
 &\le\int_s^t\frac{g_n''(p)}{g_n'(p)}\,dp\\
 &=\log g_n'(t)-\log g_n'(s).
 \end{aligned}
\end{equation}
Rearranging and exponentiating shows that
\(v(t)/g_n'(t)\le v(s)/g_n'(s)\). Thus the ratio \(v/g_n'\)
is nonincreasing, including across any singular decreases of \(v\).

This ratio becomes a derivative if we use the cap neighborhood measure
as the independent variable. Set \(\Phi=f\circ g_n^{-1}\).
Concavity makes \(f\) locally Lipschitz in \((0,1)\), and
\(g_n^{-1}\) is smooth there, so \(\Phi\) is locally absolutely
continuous. The chain rule gives
\[
 \Phi'(s)=\frac{v(g_n^{-1}(s))}{g_n'(g_n^{-1}(s))}
 \quad\text{for almost every }s\in(0,1).
\]
Since \(g_n^{-1}\) is increasing, the right-hand side is a
nonincreasing representative of \(\Phi'\). The secant slopes of
\(\Phi\) are averages of this derivative, so \(\Phi\) is concave
in \((0,1)\). Continuity extends the concavity to \([0,1]\).
By \eqref{eq:elementary}, \(\Phi(0)=0\) and \(\Phi(1)=1\),
so its graph lies above the endpoint chord: \(\Phi(s)\ge s\).
Taking \(s=g_n(p)\) yields \(f(p)\ge g_n(p)\). Together with
the cap bound \(f\le g_n\) in \eqref{eq:elementary}, this proves
\begin{equation}\label{eq:profile}
 f=g_n\quad\text{on }[0,1].
\end{equation}
By the definition \eqref{eq:f}, this is the neighborhood inequality for
Borel sets. Caps attain equality by \eqref{eq:cap-shadow}.

For rigidity, let \(A\) be a Borel optimizer of measure
\(p\in(0,1)\). Choose a full-measure Borel subset \(B\) of
\(M\setminus N(A)\), so that \((A,B)\) is an optimal avoiding
pair. Since \(f=g_n\) and \(g_n\) is smooth in the interior,
\(p\) is now a second-order point. Proposition~\ref{prop:nonisotropy}
gives \(G_A\ne0\). As \(G_A\) is symmetric and trace-free,
choose a negative eigenvalue \(-\beta\), with \(\beta>0\),
and a unit eigenvector \(z\).

Apply the variational and moment bounds from \eqref{eq:ODE}
to this pair. The identity \(f=g_n\) and the cap equation
\eqref{eq:cap-ODE} make the two ends of the comparison equal:
\[
 \begin{aligned}
 -\frac{n-2}{D_n(p)}=\frac{f''(p)}{f'(p)}
 &\le-\frac{n-2}{\beta}\\
 &\le-\frac{n-2}{D_n(p)}.
 \end{aligned}
\]
Hence \(\beta=D_n(p)\). By the formulas for \(\beta\) and
\(D_n(p)\) in \eqref{eq:rho} and \eqref{eq:D}, this is equality
in the moment bound \eqref{eq:moment-bound} for direction \(z\).
Its equality case therefore forces \(A\) to agree almost everywhere
with the cap of measure \(p\) about \(z\).

Finally, let \(A\) be any completed-measurable set of measure
\(p\). By completion of the measure, choose a Borel subset
\(A_0\subset A\) with \(\mu(A\setminus A_0)=0\).
Its neighborhood \(N(A_0)\) is measurable, as noted in
\S\ref{sec:measure-profile}, and is contained in \(N(A)\).
The Borel bound therefore gives
\[
 \begin{aligned}
 \mu_*(N(A))
 &\ge\mu_*(N(A_0))\\
 &=\mu(N(A_0))\\
 &\ge g_n(\mu(A_0))=g_n(p).
 \end{aligned}
\]
If \(0<p<1\) and \(\mu_*(N(A))=g_n(p)\), all inequalities
in this chain are equalities. Thus \(A_0\) is a Borel optimizer,
so the rigidity just proved applies to \(A_0\). Since \(A\)
and \(A_0\) differ by a null set, it applies to \(A\) as well.
\end{proof}

\section{Corollaries and applications}\label{sec:corollaries}
The neighborhood inequality first gives a sharp bound for the measures of
two avoiding sets. Taking the sets equal yields the double-cap bound;
the Funk transform gives a corresponding bound for positivity sets.
Rigidity and compactness then yield stability, while the independence
bound determines the measurable fractional chromatic number and
the exponential growth rate of the measurable chromatic number.

\subsection{Avoiding pairs and the double-cap problem}
The profile determines the entire feasible region for two avoiding
sets, including pairs of unequal measures.

\begin{corollary}\label{cor:two-set}
If completed-measurable \(A,B\subset M\) avoid one another, and
\(p=\mu(A),q=\mu(B)\), then
\begin{equation}\label{eq:two-set}
 q\le1-g_n(p),\qquad
 A_n^{-1}(p)+A_n^{-1}(q)\le\pi/2.
\end{equation}
Every pair of measures satisfying this condition is realized by
avoiding sets. If \(0<p,q<1\) and equality holds, \(A\) and \(B\)
agree almost everywhere with caps having the same axis and
complementary radii. In addition,
\begin{equation}\label{eq:product}
 \mu(A)\mu(B)\le\delta_n^2,
\end{equation}
with equality only for two caps of radius \(\pi/4\) about the same
projective axis, up to null sets.
\end{corollary}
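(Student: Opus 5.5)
The plan is to derive everything from Theorem~\ref{thm:shadow} together with the identity \eqref{eq:complement}. For the first inequality in \eqref{eq:two-set}, avoidance means \(B\cap N(A)=\varnothing\). Hence \(B\subset M\setminus N(A)\), and since \(B\) is measurable, \(\mu(N(A))\) is bounded above by the inner measure of the complement of \(B\). Concretely, \(\mu_*(N(A))\le 1-\mu(B)\), because every compact \(K\subset N(A)\) is disjoint from \(B\). Theorem~\ref{thm:shadow} then gives \(q\le1-\mu_*(N(A))\le1-g_n(p)\).

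To convert this to angles, write \(r=A_n^{-1}(p)\). Then \(g_n(p)=B_n(r)=1-A_n(\pi/2-r)\), so \(q\le A_n(\pi/2-r)\). Since \(A_n\) is increasing, this is equivalent to \(A_n^{-1}(q)\le\pi/2-r\).

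For realizability, given \(r+s\le\pi/2\), take the closed caps \(\cC_r(z)\) and open caps of radius \(s\) about the same axis, that is \(\{|x\cdot z|>\cos s\}\). The neighborhood of \(\cC_r(z)\) is the belt \(|y\cdot z|\le\sin r\). Since \(\cos s\ge\sin r\), the two sets avoid one another, and their measures are \(A_n(r)\) and \(A_n(s)\). The case \(r=0\) needs a moment's care: a point has neighborhood a hyperplane, of measure zero, so we can use the empty set or a point and its complement. Degenerate endpoints with \(p\) or \(q\) equal to \(0\) are handled trivially, e.g.\ with the empty set.

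For rigidity, suppose \(0<p,q<1\) and equality holds. Then \(\mu_*(N(A))=g_n(p)\), so Theorem~\ref{thm:shadow} forces \(A=\cC_r(z)\) almost everywhere. Identifying \(B\) is the main obstacle, since \(N(A)\) is only controlled up to inner measure and \(A\) is only known up to null sets. I would use the symmetry of avoidance: \((B,A)\) is also an avoiding pair, so equality \(p=1-g_n(q)\) holds as well, because the relation is symmetric in the angle form. Theorem~\ref{thm:shadow} applied to \(B\) then gives \(B=\cC_s(w)\) almost everywhere, with \(s=\pi/2-r\).

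It remains to show \(w=\pm z\). Pass to density-one representatives, which lie in \(A\) and \(B\) up to null sets. Zero incidence between the two caps \(\cC_r(z)\) and \(\cC_s(w)\), via Lemma~\ref{lem:compact} applied to their interiors, shows that the open caps avoid each other. If \(w\ne\pm z\), we can tilt: take the point of the open cap about \(z\) closest in angle to \(w^\perp\), and check that it is orthogonal to some interior point of the other cap. This works because the angular distance from \(z\) to \(w^\perp\) is less than \(\pi/2-\angle(z,w)\), and a simple angle computation then yields a contradiction.

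For \eqref{eq:product}, set \(r=A_n^{-1}(p)\) and \(s=A_n^{-1}(q)\), so that \(r+s\le\pi/2\). It then suffices to show \(A_n(r)A_n(\pi/2-r)\le A_n(\pi/4)^2\), since \(A_n\) is increasing. Equivalently, we need \(\phi(r)=\log A_n(r)+\log A_n(\pi/2-r)\) to be maximized uniquely at \(r=\pi/4\). This follows from strict concavity of \(\log A_n\) on \((0,\pi/2)\). That concavity holds because \(A_n'/A_n\) is strictly decreasing, by the standard argument that the ratio of \(\sin^{n-2}\) to its integral decreases; this uses that \(\sin^{n-2}\) is log-concave. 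Then \(\phi\) is strictly concave and symmetric about \(\pi/4\). Equality forces \(r=s=\pi/4\) with equality in \eqref{eq:two-set}, and the rigidity above yields caps of radius \(\pi/4\) about a common axis.
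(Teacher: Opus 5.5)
Your argument is correct. The first inequality, the conversion to angles, realizability and the product bound all follow the paper. Your closed-plus-open caps work just as well as the paper's two open caps. One slip in your degenerate remark: a point and its complement do \emph{not} avoid each other, since the complement contains the hyperplane \(z^\perp\). Your general construction already pairs the point with \(\{|x\cdot z|>0\}\), and \((\varnothing,M)\) also works.

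The genuine difference is in identifying \(B\) in the rigidity step. You apply the rigidity of Theorem~\ref{thm:shadow} a second time, to \(B\), using the symmetry of the angle inequality. You then need a separate geometric lemma to show that the two axes agree.

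The paper skips both steps. Once \(A=\cC_r(z)\) almost everywhere, Lemma~\ref{lem:compact} says \(B^*\) avoids \(A^*\), and \(A^*\) is exactly the open cap of radius \(r\) about \(z\). Its neighborhood is the open belt \(\{|y\cdot z|<\sin r\}\), so \(B^*\) lies in the closed cap of radius \(\pi/2-r\) about \(z\). Since \(\mu(B^*)=q\) equals that cap's measure, \(B\) fills the cap up to a null set. This yields the common axis automatically and uses the deep rigidity statement only once. Your route is more symmetric in \(A\) and \(B\), but longer.

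If you keep your route, fix the justification of the tilt. The angular distance from \(z\) to \(w^\perp\) is exactly \(\pi/2-\angle(z,w)\), not less than it. The correct comparison is that this distance is less than \(r+s=\pi/2\) whenever \(w\neq\pm z\). Hence the open cap of radius \(r\) about \(z\) contains a point \(x\) with \(\arcsin|x\cdot w|<s\); it is only an approximately closest point, since the cap is open. The great sphere \(x^\perp\) then meets the open cap of radius \(s\) about \(w\), which gives the required orthogonal pair.
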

\begin{proof}
Avoidance means \(N(A)\subset M\setminus B\). Monotonicity of inner
measure and Theorem~\ref{thm:shadow} therefore give
\[
 g_n(p)\le\mu_*(N(A))\le1-q.
\]
Put \(r=A_n^{-1}(p)\) and \(s=A_n^{-1}(q)\).
Identity \eqref{eq:complement} converts this bound into
\(r+s\le\pi/2\). Conversely, open projective caps about a common
axis with these radii avoid one another: representatives with
positive inner product with the axis have mutual angle less than
\(r+s\). These caps realize every admissible pair of measures,
with the empty set used when a radius is zero.

If equality holds and \(0<p,q<1\), the displayed inequalities
force \(\mu_*(N(A))=g_n(p)\). The rigidity in
Theorem~\ref{thm:shadow} makes \(A\) a cap almost everywhere,
with some axis \(z\). To identify \(B\) without changing pointwise
avoidance, use density-one representatives. Avoidance gives zero
incidence, so Lemma~\ref{lem:compact} says that \(A^*\) and
\(B^*\) still avoid one another. Here \(A^*\) is exactly the open
cap of radius \(r\) about \(z\), whose neighborhood is the open belt
\(|y\cdot z|<\sin r\). Thus \(B^*\) lies in the complementary
closed cap of radius \(\pi/2-r\) about the same axis. Its measure
\(q=1-g_n(p)\) equals that cap's measure, so it fills the cap up
to a null set.

For the product bound, the case \(pq=0\) is immediate. Otherwise
\(r,s\in(0,\pi/2)\). We use strict concavity of \(\log A_n\)
to show that the product is largest when the radii are equal.
From \eqref{eq:angular},
\(A_n''(r)=(n-2)\cot r\,A_n'(r)\). Since \(\cot t>\cot r\)
for \(0<t<r\), we obtain
\[
 \begin{aligned}
 A_n'(r)
 &=c_n\int_0^r(n-2)\cot t\,\sin^{n-2}t\,dt\\
 &>(n-2)\cot r\,A_n(r),\\
 (\log A_n)''(r)
 &=\frac{A_n'(r)}{A_n(r)}
   \left((n-2)\cot r-\frac{A_n'(r)}{A_n(r)}\right)<0.
 \end{aligned}
\]
Midpoint concavity of \(\log A_n\), followed by monotonicity and
\(r+s\le\pi/2\), now gives
\[
 \begin{aligned}
 A_n(r)A_n(s)
 &\le A_n\!\left(\frac{r+s}{2}\right)^2\\
 &\le A_n(\pi/4)^2=\delta_n^2.
 \end{aligned}
\]
Equality requires \(r=s\) and \(r+s=\pi/2\), hence
\(r=s=\pi/4\). Equality in the two-set bound then identifies
the common axis.
\end{proof}

An independent set avoids itself. After passing from the sphere to
projective space, the two-set bound therefore gives the double-cap
bound and its equality case.

\begin{corollary}\label{cor:double}
For \(n\ge3\), \(\alpha_n=\delta_n\). Every measurable independent
\(E\subset S^{n-1}\) with \(\sigma(E)=\delta_n\) agrees almost
everywhere with two opposite caps of angular radius \(\pi/4\).
\end{corollary}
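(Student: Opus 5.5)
We reduce both assertions to Corollary~\ref{cor:two-set} applied to the pair \((A,A)\), where \(A\) is the projective image of an antipodally saturated independent set.

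\emph{Upper bound.} Let \(E\subset S^{n-1}\) be measurable and independent. Put \(\widetilde E=E\cup(-E)\). It is measurable, and it is still independent, because \((-x)\cdot y=-(x\cdot y)\) and \(x\cdot(-x)=-1\neq0\). Since \(\widetilde E\) is antipodally saturated, \(\widetilde E=\pi^{-1}(A)\) with \(A=\pi(\widetilde E)\), so \(A\) is completed-measurable and \(p:=\mu(A)=\sigma(\widetilde E)\ge\sigma(E)\). Independence of \(\widetilde E\) says exactly that \((A,A)\) is an avoiding pair. The product bound \eqref{eq:product} then gives \(p^2\le\delta_n^2\), hence \(\sigma(E)\le p\le\delta_n\). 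Alternatively, the angular bound in \eqref{eq:two-set} with \(q=p\) gives \(2A_n^{-1}(p)\le\pi/2\), which is the same conclusion. Taking the supremum gives \(\alpha_n\le\delta_n\). The reverse inequality comes from the two opposite open caps of radius \(\pi/4\) described in the introduction: they form an independent set of measure \(\delta_n\). Hence \(\alpha_n=\delta_n\).

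\emph{Rigidity.} Suppose \(\sigma(E)=\delta_n\). Then every inequality above is an equality. First, \(\sigma(\widetilde E)=\delta_n=\sigma(E)\). Since \(\sigma(-E)=\sigma(E)\) and \(\sigma(E\cup -E)=\sigma(E)\), the set \(-E\setminus E\) is null, so \(E\) and \(\widetilde E\) agree almost everywhere. Second, \(\mu(A)^2=\delta_n^2\), so equality holds in \eqref{eq:product}. Since \(0<\delta_n<1\), the equality clause of Corollary~\ref{cor:two-set} shows that \(A\) agrees almost everywhere with a projective cap of radius \(\pi/4\) about some axis \(z\). Pulling back by \(\pi\), which preserves measure, shows that \(\widetilde E\), and hence \(E\), agrees almost everywhere with \(\{x:|x\cdot z|\ge\cos(\pi/4)\}\). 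This is the union of two opposite caps of angular radius \(\pi/4\). The boundary latitude is null, so there is no ambiguity between open and closed caps.

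The only point needing care is the equality clause of \eqref{eq:product}. It is stated for a pair \((A,B)\), and here we apply it with \(B=A\). This requires \(0<p,q<1\) and equality in the two-set bound, and both hold because \(r=s=\pi/4\) forces \(q=1-g_n(p)\) via \eqref{eq:complement}. Alternatively, one can apply the rigidity of Theorem~\ref{thm:shadow} directly. The inclusion \(A\subset M\setminus N(A)\) gives \(\mu_*(N(A))\le1-\delta_n=g_n(\delta_n)\), so equality holds in \eqref{eq:main}.
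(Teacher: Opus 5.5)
Your proof is correct and is essentially the paper's argument: you saturate antipodally, project to \(A\subset\RP^{n-1}\), apply Corollary~\ref{cor:two-set} to the avoiding pair \((A,A)\) to get the bound (the paper uses the angular form \(2A_n^{-1}(p)\le\pi/2\), which you also give), and then use the equality clause of that corollary together with \(\sigma(\widetilde E\setminus E)=0\) for rigidity. Your explicit check that \(r=s=\pi/4\) puts you in the equality case of the two-set bound is a step the paper leaves implicit.
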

\begin{proof}
The antipodal saturation \(\widetilde E=E\cup(-E)\) remains
independent. Its projective image \(A\) is completed-measurable,
because the finite covering is locally a smooth diffeomorphism,
and \(p=\mu(A)=\sigma(\widetilde E)\ge\sigma(E)\).
Apply \eqref{eq:two-set} to \((A,A)\) to obtain
\(2A_n^{-1}(p)\le\pi/2\). Thus \(\sigma(E)\le\delta_n\).
Opposite open caps attain this measure. If equality holds, then
\(p=\delta_n\), and equality in the two-set bound makes \(A\)
a projective cap almost everywhere. Also
\(\sigma(\widetilde E\setminus E)=0\), proving the spherical claim.
\end{proof}
Corollary~\ref{cor:double} therefore resolves Witsenhausen's problem
for \(n\ge3\), proving the double-cap conjecture and characterizing
all maximizers up to null sets. Open caps are essential for the
pointwise construction: their boundaries can contain orthogonal pairs. Equality up to null
sets does not authorize arbitrary boundary additions. The argument
uses \(n\ge3\); the value \(\alpha_2=1/2\) follows from the
one-angle circle result of \cite{DeCortePikhurko}, where rigidity
takes a different form.

\begin{corollary}\label{cor:support}
Let \(h\in L^2(M)\) be nonnegative, and put
\(p=\mu(\{h>0\})\). Then
\begin{equation}\label{eq:support}
 \mu(\{Fh>0\})\ge g_n(p).
\end{equation}
The bound is sharp. If \(0<p<1\) and equality holds, then
\(\{h>0\}\) agrees almost everywhere with a cap.
\end{corollary}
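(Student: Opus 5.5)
The plan is to reduce Corollary~\ref{cor:support} to the two-set bound, Corollary~\ref{cor:two-set}, by showing that \(D=\{h>0\}\) and \(E=\{Fh=0\}\) have zero incidence. Since \(h\ge0\) and \(F\) is self-adjoint,
\[
 \ip{h}{F\one_E}=\ip{Fh}{\one_E}=\int_E Fh\,d\mu=0,
\]
because \(Fh=0\) on \(E\). The integrand \(hF\one_E\) is nonnegative, so \(F\one_E=0\) almost everywhere on \(D\). Writing \(D\) as the increasing union of the level sets \(\{h>1/k\}\) and using nonnegativity, exactly as in the proof of Proposition~\ref{prop:attainment}, we get \(\ip{\one_D}{F\one_E}=0\). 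Here \(Fh\) is defined as an \(L^2\) class; fix any representative, since the conclusion only concerns measures.

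By Lemma~\ref{lem:compact}, the density-one representatives \(D^*\) and \(E^*\) avoid one another. They have measures \(p\) and \(\mu(E)=1-\mu(\{Fh>0\})\). Corollary~\ref{cor:two-set} applied to \((D^*,E^*)\) gives \(1-\mu(\{Fh>0\})\le 1-g_n(p)\), which is \eqref{eq:support}.

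For sharpness, take \(h=\one_C\) with \(C\) an open cap of radius \(r\). The function \(F\one_C(y)\) is the normalized measure of \(C\cap y^\perp\). This is positive exactly when the hyperplane \(y^\perp\) meets the open cap, namely on the open belt \(|y\cdot z|<\sin r\). Up to null sets, \(\{F\one_C>0\}\) is therefore \(N(C)\), whose measure is \(g_n(p)\) by \eqref{eq:cap-shadow}.

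For rigidity, assume \(0<p<1\) and equality in \eqref{eq:support}. Then \(q=\mu(E^*)=1-g_n(p)\) and \(0<q<1\), since \(g_n(p)\in(0,1)\). The pair \((D^*,E^*)\) is an avoiding pair with equality in \eqref{eq:two-set}. The equality clause of Corollary~\ref{cor:two-set} then makes \(D^*\), and hence \(\{h>0\}\), agree almost everywhere with a cap.

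The only delicate step is passing from the \(L^2\)-level identity \(\ip{h}{F\one_E}=0\) to genuine pointwise avoidance. That passage is handled entirely by the density-one representatives of Lemma~\ref{lem:compact}, after the level-set reduction. The sharpness computation needs one small check: the boundary of the belt is null, so the choice of open or closed cap and of representative is irrelevant.
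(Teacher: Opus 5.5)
Your proposal is correct and follows the paper's proof essentially step for step: self-adjointness and nonnegativity give zero incidence between \(\{h>0\}\) and \(\{Fh=0\}\), Lemma~\ref{lem:compact} upgrades this to pointwise avoidance of the density-one representatives, and Corollary~\ref{cor:two-set} supplies both the bound and its equality case, with cap indicators giving sharpness. The only difference is that your level-set reduction is unnecessary, since \(F\one_E=0\) almost everywhere on \(D\) already gives \(\ip{\one_D}{F\one_E}=\int_D F\one_E\,d\mu=0\) directly.
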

\begin{proof}
Let \(E=\{h>0\}\) and \(Z=\{Fh=0\}\), understood
modulo null sets. Self-adjointness gives
\(0=\ip{\one_Z}{Fh}=\int_M h\,F\one_Z\,d\mu\).
Nonnegativity implies that
\(F\one_Z=0\) almost everywhere on \(E\), and
thus \(E\) and \(Z\) have zero incidence. Lemma~\ref{lem:compact}
makes \(E^*,Z^*\) an avoiding pair, so Corollary~\ref{cor:two-set}
gives \(1-\mu(Z)\ge g_n(p)\). Its equality case shows that,
when \(0<p<1\), equality forces \(E\) to be a cap almost everywhere.
Cap indicators attain the bound: their Funk averages are positive
on the interior of the corresponding belt and vanish outside it,
up to its null boundary.
\end{proof}

\subsection{Qualitative stability}
Rigidity and compactness imply that a set whose neighborhood measure is
almost minimal must be close in measure to some cap.

\begin{corollary}\label{cor:stability}
Fix \(p\in(0,1)\). For every \(\varepsilon>0\) there is
\(\eta=\eta(n,p,\varepsilon)>0\) such that
\[
 \mu(A)=p,\quad \mu_*(N(A))\le g_n(p)+\eta
 \quad\Longrightarrow\quad
 \inf_{z\in S^{n-1}}
 \mu\bigl(A\triangle\cC_{A_n^{-1}(p)}(z)\bigr)<\varepsilon
\]
for every completed-measurable \(A\subset M\).
\end{corollary}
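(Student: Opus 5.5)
We argue by contradiction, combining the compactness from the proof of Proposition~\ref{prop:attainment} with the rigidity in Theorem~\ref{thm:shadow}. Suppose the statement fails for some \(\varepsilon>0\). Then there are completed-measurable sets \(A_j\) with \(\mu(A_j)=p\) and \(\mu_*(N(A_j))\le g_n(p)+1/j\), yet \(\mu(A_j\triangle\cC_r(z))\ge\varepsilon\) for every axis \(z\), where \(r=A_n^{-1}(p)\). We first replace each \(A_j\) by a Borel subset \(A_j^0\) of full measure. Then \(N(A_j^0)\subset N(A_j)\) is measurable with \(\mu(N(A_j^0))\le g_n(p)+1/j\), and the distance to caps is unchanged. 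Next choose full-measure Borel sets \(B_j\subset M\setminus N(A_j^0)\), so that \((A_j^0,B_j)\) is an avoiding pair with \(\mu(B_j)\ge1-g_n(p)-1/j\).

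We then pass to weak \(L^2\) limits \(\one_{A_j^0}\rightharpoonup u\) and \(\one_{B_j}\rightharpoonup v\). As in Proposition~\ref{prop:attainment}, \(0\le u,v\le1\), \(\int u=p\), \(\int v\ge1-g_n(p)\), and compactness of \(F\) (Lemma~\ref{lem:compact}) gives \(\ip{u}{Fv}=0\). Put \(D=\{u>0\}\) and \(E=\{v>0\}\). These sets have zero incidence, so \(D^*\) and \(E^*\) avoid one another, with \(\mu(D)\ge p\) and \(\mu(E)\ge1-g_n(p)\). Corollary~\ref{cor:two-set} gives \(\mu(E)\le1-g_n(\mu(D))\), and \(g_n\) is strictly increasing. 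Hence \(\mu(D)=p\), and equality holds in the two-set bound. By the rigidity part of Corollary~\ref{cor:two-set}, \(D\) agrees almost everywhere with a cap \(\cC_r(z_0)\).

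The main obstacle is upgrading this weak convergence to convergence in measure. We use that \(\int u=p=\mu(D)\) and \(0\le u\le1\) with \(u=0\) off \(D\). Together these force \(u=\one_D\) almost everywhere, since \(\int_D(1-u)=0\). So \(\one_{A_j^0}\rightharpoonup\one_D\). Because all these functions are indicators, \(\|\one_{A_j^0}\|_2^2=p=\|\one_D\|_2^2\), so weak convergence together with convergence of norms gives strong \(L^2\) convergence. Therefore \(\mu(A_j^0\triangle D)=\|\one_{A_j^0}-\one_D\|_2^2\to0\) along the subsequence, hence \(\mu(A_j\triangle\cC_r(z_0))\to0\). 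This contradicts the assumed lower bound \(\varepsilon\). The one point to handle with care is that the rigidity statement requires \(0<p<1\), which is the hypothesis.

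Finally, no rotations need to be extracted: the weak limit already selects an axis \(z_0\). The infimum over \(z\) in the statement is there only because different near-optimal sets approach caps with different axes, and the constant \(\eta\) is uniform in \(A\) by the contradiction argument. The constant depends only on \(n\), \(p\) and \(\varepsilon\), as claimed.
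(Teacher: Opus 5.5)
Your proof is correct and has the same structure as the paper's: contradiction, a weak limit controlled by compactness of \(F\), rigidity identifying the limit as a cap indicator, and the weak-to-strong upgrade via \(\|\one_{A_j}\|_2^2=p=\|\one_D\|_2^2\). The only difference is that you carry the complementary sets \(B_j\) along and invoke the two-set rigidity of Corollary~\ref{cor:two-set}, as in Proposition~\ref{prop:attainment} (the needed condition \(0<q<1\) holds since \(q=1-g_n(p)\)), whereas the paper passes to the almost-everywhere limit \(F\one_{A_j}\to Fu\) and combines Fatou's lemma with Corollary~\ref{cor:support}.
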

\begin{proof}
It suffices to consider Borel \(A\): a full-measure Borel subset
has a neighborhood of no larger inner measure and the same distance in
symmetric-difference measure from every cap.
If the assertion fails, choose Borel sets \(A_j\) of measure \(p\)
whose neighborhoods satisfy \(\mu(N(A_j))\to g_n(p)\), while
\(\mu(A_j\triangle C)\ge\varepsilon\) for every cap \(C\)
of measure \(p\).

Pass to a subsequence with \(\one_{A_j}\rightharpoonup u\) weakly
in \(L^2(M)\). Then \(0\le u\le1\) and \(\int_M u\,d\mu=p\),
so \(D=\{u>0\}\) has measure at least \(p\).
Compactness of \(F\) gives \(F\one_{A_j}\to Fu\) strongly in
\(L^2\) and, after a further subsequence, almost everywhere.
Where \(Fu>0\), these averages are eventually positive.
Since \(F\one_{A_j}\) vanishes outside \(N(A_j)\), Fatou's
lemma and Corollary~\ref{cor:support} give
\[
 g_n(p)\le g_n(\mu(D))\le\mu(\{Fu>0\})
 \le\liminf_j\mu(N(A_j))=g_n(p).
\]
Strict increase of \(g_n\) forces \(\mu(D)=p\).
As \(0\le u\le1\) and \(\int_M u\,d\mu=\mu(D)\), we have
\(u=\one_D\) almost everywhere. Equality in
Corollary~\ref{cor:support} makes \(D\) a cap up to a null set.

Finally, weak convergence to an indicator of the same measure is
strong. Indeed,
\[
 \begin{aligned}
 \mu(A_j\triangle D)
 &=\|\one_{A_j}-\one_D\|_2^2\\
 &=2p-2\ip{\one_{A_j}}{\one_D}\longrightarrow0.
 \end{aligned}
\]
This contradicts the separation of \(A_j\) from every cap of
measure \(p\).
\end{proof}
An explicit relation between \(\eta\) and \(\varepsilon\) would be
a quantitative strengthening; the compactness argument alone
does not provide one.

\subsection{Measurable colorings of orthogonality graphs}\label{subsec:colorings}
The spherical orthogonality graphs form a natural compact counterpart
to Euclidean unit-distance graphs. For this spherical family, we
determine the measurable fractional chromatic number in every dimension
and the exponential growth rate of the measurable chromatic number.
The corresponding Euclidean questions remain open: the exact measurable
fractional value is unknown even in the plane \cite[\S4]{CranstonRabern}, and the
precise exponential growth rate of the measurable chromatic number has
yet to be determined \cite[\S1]{Davies}.
Orthogonality corresponds to unit distance on a sphere of radius
\(1/\sqrt2\). More general spherical distance graphs were studied
by Lov\'asz~\cite{LovaszSphere}, who obtained topological lower
bounds for their chromatic numbers.

Recall that \(\mathcal G_n\) has vertex set \(S^{n-1}\), with
orthogonal vertices adjacent. For a graph \(G\) on a measured space,
write \(\chi_{\mathrm m}(G)\) for the least number of measurable
independent sets in a partition of its vertices.
A measurable fractional coloring allows a finite family of measurable
independent sets \(I_1,\ldots,I_k\) to overlap, assigning weights
\(w_j\ge0\) such that
\begin{equation}\label{eq:fractional-definition}
 \sum_{j=1}^k w_j\one_{I_j}(x)\ge1
 \qquad\text{for every vertex }x.
\end{equation}
The infimum of the total weight \(\sum_jw_j\) is denoted by
\(\chi_{f,\mathrm m}(G)\); see \cite{BachocEtAl}.

The fractional value follows from a general reciprocal identity.
For finite vertex-transitive graphs it is classical
\cite[Proposition 3.1.1]{ScheinermanUllman}; the general measurable
lower bound is \cite[Proposition 2.1]{BachocEtAl}.
We include the compact version for completeness. For its upper bound,
we average translates of an independent set whose measure is close
to the supremum, then use compactness to replace the average by a
finite weighted cover of every vertex.

\begin{proposition}\label{prop:homogeneous}
Let \(X\) be a compact metric space with a continuous transitive
action of a compact group \(\Gamma\), and let \(\lambda\) be its
invariant Borel probability measure. Suppose \(G\) is a graph on
\(X\) preserved by this action, and its ordered edge relation is
closed in \(X\times X\) and disjoint from the diagonal. Here the
ordered edge relation is \(\{(x,y):x\text{ is adjacent to }y\}\),
and the diagonal is \(\{(x,x):x\in X\}\). If
\[
 \alpha_\lambda(G)=
 \sup\{\lambda(I):I\subset X\text{ measurable and independent}\},
\]
then
\begin{equation}\label{eq:homogeneous}
 \chi_{f,\mathrm m}(G)=\frac1{\alpha_\lambda(G)}.
\end{equation}
\end{proposition}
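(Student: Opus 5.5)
The proof splits into the two inequalities \(\chi_{f,\mathrm m}(G)\ge1/\alpha_\lambda(G)\) and \(\chi_{f,\mathrm m}(G)\le1/\alpha_\lambda(G)\). First I would note that \(\alpha_\lambda(G)>0\). Since the edge relation \(\mathcal E\) is closed and misses the diagonal, compactness gives an open neighborhood \(W\times W\) of each diagonal point \((x,x)\) disjoint from \(\mathcal E\). Thus every point has an open independent neighborhood. By homogeneity, open sets have positive \(\lambda\)-measure: finitely many translates of a nonempty open set cover \(X\). Hence \(\alpha_\lambda(G)>0\), and the right side of \eqref{eq:homogeneous} is finite.

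\emph{Lower bound.} Take a measurable fractional coloring \((I_j,w_j)\) satisfying \eqref{eq:fractional-definition}. Integrating against \(\lambda\) gives
\[
 1\le\sum_j w_j\lambda(I_j)\le\alpha_\lambda(G)\sum_j w_j .
\]
Taking the infimum over colorings yields \(\chi_{f,\mathrm m}(G)\ge1/\alpha_\lambda(G)\). This step uses no structure beyond measurability.

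\emph{Upper bound.} Fix \(\varepsilon>0\) and a measurable independent \(I\) with \(\lambda(I)>\alpha_\lambda(G)-\varepsilon\). The obstacle is that translates \(\gamma I\) average to \(\lambda(I)\) at every point, but only after integrating over \(\Gamma\). A finite family must cover \emph{every} vertex, and measurable sets give no pointwise control. I would first replace \(I\) by an open independent set of nearly the same measure, so that coverage becomes an open condition.

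\begin{enumerate}
 \item By inner regularity, choose a compact \(K\subset I\) with \(\lambda(K)>\alpha_\lambda(G)-\varepsilon\). Then \(K\times K\) is a compact set disjoint from the closed set \(\mathcal E\).
 \item Fix a compatible metric. Compactness gives \(\rho>0\) such that the closed \(\rho\)-neighborhood \(U\) of \(K\) still satisfies \((U\times U)\cap\mathcal E=\varnothing\). Its interior \(U^\circ\) is then an open independent set with \(\lambda(U^\circ)\ge\lambda(K)\).
 \item Let \(m_\Gamma\) be normalized Haar measure. For each \(x\), Fubini and invariance of \(\lambda\) under the transitive action (so \(\lambda\) is the pushforward of \(m_\Gamma\) under \(\gamma\mapsto\gamma^{-1}x\)) give
 \[
  m_\Gamma(\{\gamma:x\in\gamma U^\circ\})=\lambda(U^\circ)=:\beta .
 \]
 \item Sample \(\gamma_1,\dots,\gamma_N\) i.i.d.\ from \(m_\Gamma\) and use weights \(w_j=1/(N(\beta-\varepsilon))\) on the open independent sets \(\gamma_jU^\circ\). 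I need, for some choice of \(\gamma\)'s, the uniform bound
 \[
  \frac1N\#\{j:x\in\gamma_jU^\circ\}\ge\beta-\varepsilon\qquad\text{for all }x .
 \]
\end{enumerate}

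This uniformity is the main difficulty. I would get it from a finite net plus equicontinuity. For each \(x\), the set \(\{\gamma:x\in\gamma U^\circ\}\) is open in \(\Gamma\). Applying inner regularity of \(m_\Gamma\), there is a compact \(L_x\) inside it of measure \(>\beta-\varepsilon/2\). Then \(L_x^{-1}x\subset U^\circ\) is compact, so some open neighborhood \(O_x\) of \(x\) satisfies \(L_x^{-1}O_x\subset U^\circ\), by a tube lemma for the continuous action. This means \(O_x\subset\gamma U^\circ\) for every \(\gamma\in L_x\).

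Cover \(X\) by finitely many neighborhoods \(O_{x_1},\dots,O_{x_k}\). The strong law of large numbers, applied to the finitely many events \(\{\gamma\in L_{x_i}\}\), gives \(\gamma_1,\dots,\gamma_N\) with empirical frequencies \(>\beta-\varepsilon\) for each \(i\) simultaneously. Every \(x\) lies in some \(O_{x_i}\), and each \(\gamma_j\in L_{x_i}\) satisfies \(x\in O_{x_i}\subset\gamma_jU^\circ\). So every \(x\) is covered with frequency \(>\beta-\varepsilon\).

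The total weight is \(1/(\beta-\varepsilon)\le1/(\alpha_\lambda(G)-2\varepsilon)\). Letting \(\varepsilon\to0\) gives the upper bound, and with the lower bound this proves \eqref{eq:homogeneous}.
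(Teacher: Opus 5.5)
Your proof is correct. The positivity of $\alpha_\lambda(G)$ and the lower bound are argued as in the paper, but your upper bound takes a genuinely different route.

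Both arguments first enlarge a compact independent $K$ to an open independent neighborhood $U$. The paper then discretizes Haar measure deterministically:
\begin{itemize}
 \item continuity of the action gives an identity neighborhood $W$ with $WK\subset U$;
 \item a finite cover $\Gamma=\bigcup_j\gamma_jW$ is refined to a Borel partition $V_j\subset\gamma_jW$;
 \item the exact pointwise identity $\lambda(K)=\int_\Gamma\one_{\gamma K}(x)\,dm_\Gamma(\gamma)$ is dominated piece by piece, giving $\lambda(K)\le\sum_j m_\Gamma(V_j)\one_{\gamma_jU}(x)$ at every $x$.
\end{itemize}
The uniformity you single out as the main difficulty is therefore automatic in the paper. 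The inclusion $\gamma K\subset\gamma_jU$ holds for all $\gamma\in V_j$ at once. The cost is exactly $1/\lambda(K)$ with no further loss. No inner regularity of Haar measure, per-point tube lemma, finite net in $X$, or law of large numbers is needed.

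Your Monte Carlo discretization works the other way round. It puts the compactness on the group side (the sets $L_x$) and the net on the space side. This costs an extra $\varepsilon$ and more machinery. In return, it produces equal weights $1/(N(\beta-\varepsilon))$, that is, $N$ translates of one open independent set covering every point at least $N(\beta-\varepsilon)$ times, which is a multicoloring statement. It also makes $N$ explicitly controllable through a union bound over the $k$ net points.

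Only trivial points need tidying. Take $\varepsilon<\alpha_\lambda(G)/2$, so that $\beta-\varepsilon>0$ and $\alpha_\lambda(G)-2\varepsilon>0$. The independent neighborhoods of diagonal points come from closedness of the edge relation alone, not from compactness.
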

\begin{proof}
Write \(\alpha=\alpha_\lambda(G)\). The assumptions imply
\(\alpha>0\): the complement of the edge relation is open and
contains \((x,x)\), so each point has an independent open
neighborhood. Every nonempty open set has positive invariant
measure, since finitely many of its translates cover \(X\).
Integrating any fractional cover gives
\[
 1\le\sum_jw_j\lambda(I_j)\le\alpha\sum_jw_j.
\]
This proves the lower bound, even when coverage holds only
almost everywhere.

For the reverse bound, fix \(0<\varepsilon<\alpha\).
Inner regularity gives a compact independent set \(K\) with
\(\lambda(K)>\alpha-\varepsilon\). Since the edge relation is
closed and disjoint from the compact set \(K\times K\), there
is an open independent neighborhood \(U\) of \(K\).
This small enlargement allows nearby translates of \(K\) to be
covered by a single translate of \(U\).

More precisely, compactness of \(K\) and continuity of the action
give an open neighborhood \(W\) of the identity in \(\Gamma\)
with \(WK\subset U\). Choose a finite cover
\(\Gamma=\bigcup_{j=1}^k\gamma_jW\), and refine it to a Borel
partition \(V_j\subset\gamma_jW\). If \(\gamma\in V_j\), write
\(\gamma=\gamma_jw\) with \(w\in W\); then
\(\gamma K=\gamma_j(wK)\subset\gamma_jU\).

For normalized Haar measure \(m_\Gamma\) and fixed \(x\),
transitivity implies that \(\gamma^{-1}x\) has distribution
\(\lambda\). Hence the average of \(\one_{\gamma K}(x)\) is
\(\lambda(K)\). Splitting that average over the partition and
using the inclusion above gives, at every \(x\in X\),
\[
 \begin{aligned}
 \lambda(K)
 &=\sum_{j=1}^k\int_{V_j}\one_{\gamma K}(x)\,dm_\Gamma(\gamma)\\
 &\le\sum_{j=1}^k m_\Gamma(V_j)\one_{\gamma_jU}(x).
 \end{aligned}
\]
The sets \(\gamma_jU\) are independent. Dividing by
\(\lambda(K)>0\) therefore gives a finite fractional cover with
weights \(w_j=m_\Gamma(V_j)/\lambda(K)\) and total cost
\(\sum_jw_j=\frac1{\lambda(K)}<\frac1{\alpha-\varepsilon}\).
Letting \(\varepsilon\downarrow0\) proves the reverse bound.
\end{proof}

For the spherical orthogonality graph, the double-cap theorem gives
\(\alpha_\sigma(\mathcal G_n)=\delta_n\), so
Proposition~\ref{prop:homogeneous} determines its measurable fractional
chromatic number. Since every measurable coloring is also a fractional
coloring, this gives a lower bound for the measurable chromatic number.
Combined with classical spherical covering bounds, this lower bound
also determines its exponential growth rate.

\begin{corollary}\label{cor:colorings}
For \(n\ge3\),
\begin{align}
 \chi_{f,\mathrm m}(\mathcal G_n)
   &=\delta_n^{-1},\label{eq:fractional-value}\\
 \chi_{\mathrm m}(\mathcal G_n)
   &\ge\left\lceil\delta_n^{-1}\right\rceil,
 \qquad
 \chi_{\mathrm m}(\mathcal G_n)=(\sqrt2+o(1))^n.
 \label{eq:chromatic-bound}
\end{align}
Both coloring parameters have the same values on the projective
orthogonality graph \(\mathcal P_n\).
\end{corollary}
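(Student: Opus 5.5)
The plan has three parts: the fractional value, the two chromatic bounds, and the transfer to the projective graph \(\mathcal P_n\).

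\textbf{Fractional value.} Apply Proposition~\ref{prop:homogeneous} with \(X=S^{n-1}\), \(\Gamma=SO(n)\) acting transitively for \(n\ge3\), and \(\lambda=\sigma\). The ordered edge relation \(\{(x,y):x\cdot y=0\}\) is closed, and it misses the diagonal since \(x\cdot x=1\). Rotations preserve orthogonality. Measurable independent sets are exactly those in \eqref{eq:alpha}, so \(\alpha_\sigma(\mathcal G_n)=\alpha_n=\delta_n\) by Corollary~\ref{cor:double}. This gives \eqref{eq:fractional-value}.

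\textbf{Lower bound for \(\chi_{\mathrm m}\).} A measurable partition into \(k\) independent sets is a fractional coloring with unit weights and total weight \(k\). Hence \(k\ge\delta_n^{-1}\), and since \(k\) is an integer, \(k\ge\lceil\delta_n^{-1}\rceil\).

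\textbf{Growth rate, lower half.} Estimate \(\delta_n=c_n\int_0^{\pi/4}\sin^{n-2}\theta\,d\theta\). We have \(c_n=O(\sqrt n)\), and the integrand is at most \(\sin^{n-2}(\pi/4)=2^{-(n-2)/2}\). Therefore \(\delta_n\le O(\sqrt n)\,2^{-n/2}\), and \(\delta_n^{-1}\ge(\sqrt2-o(1))^n\).

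\textbf{Growth rate, upper half.} I would cover \(S^{n-1}\) by \(N\le(\sqrt2+o(1))^n\) open caps of angular radius slightly less than \(\pi/4\), say \(\pi/4-\epsilon_n\) with \(\epsilon_n\to0\). This uses the Rogers covering bound~\cite{RogersCovering} (or Prosanov~\cite{Prosanov}), which gives \(N\le O(n\log n)/\sigma(\text{cap})\). Since the cap measure is \(\approx(\sin r)^{n}\) up to polynomial factors, this is \((\sqrt2+o(1))^n\) once \(\epsilon_n\) decays slowly enough, for instance \(\epsilon_n=1/\sqrt n\). Each open cap of radius less than \(\pi/4\) is independent, because any two of its points have angle less than \(\pi/2\). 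Disjointify by \(I_j=C_j\setminus\bigcup_{i<j}C_i\). These are Borel, independent as subsets of independent sets, and partition the sphere, so \(\chi_{\mathrm m}\le N\).

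\textbf{Transfer to \(\mathcal P_n\).} A measurable independent set in \(\mathcal P_n\) lifts under \(\pi\) to an antipodally symmetric independent set of the same measure. Conversely, Corollary~\ref{cor:double}, via the saturation argument, gives \(\alpha_\mu(\mathcal P_n)=\delta_n\). Proposition~\ref{prop:homogeneous} applies to \(M\) with the \(SO(n)\) action and the closed edge relation, which misses the diagonal. This gives the same fractional value.

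For \(\chi_{\mathrm m}(\mathcal P_n)\): any projective coloring lifts to a spherical coloring, so \(\chi_{\mathrm m}(\mathcal G_n)\le\chi_{\mathrm m}(\mathcal P_n)\). The lower bound \(\lceil\delta_n^{-1}\rceil\) holds on \(\mathcal P_n\) by the same unit-weight argument. For the upper bound, the images \(\pi(C_j)\) are projective caps of radius less than \(\pi/4\), which are independent in \(\mathcal P_n\), so disjointifying them gives the same count. Hence both graphs have the growth rate \((\sqrt2+o(1))^n\). I do not claim that \(\chi_{\mathrm m}(\mathcal P_n)\) equals \(\chi_{\mathrm m}(\mathcal G_n)\) exactly.

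\textbf{Main obstacle.} The only delicate step is the covering-density bookkeeping: choosing \(\epsilon_n\) so that both the polynomial factor \(O(n\log n)\) and the ratio \((\sin(\pi/4)/\sin(\pi/4-\epsilon_n))^n\) are \(e^{o(n)}\). The choice \(\epsilon_n=1/\sqrt n\) works, since \(\log\) of the ratio is \(O(n\epsilon_n)=O(\sqrt n)\).
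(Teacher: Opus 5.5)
Your arguments for the fractional value, for \(\chi_{\mathrm m}(\mathcal G_n)\ge\lceil\delta_n^{-1}\rceil\), and for the growth rate are correct and essentially follow the paper. The one variation is that you let the radius \(\pi/4-\epsilon_n\) depend on \(n\). This needs the covering-density bound to hold uniformly for radii near \(\pi/4\), which Rogers' radius-independent density bound provides. The paper avoids the issue by fixing \(r<\pi/4\), proving \(\limsup_n\chi_{\mathrm m}(\mathcal G_n)^{1/n}\le1/\sin r\), and only then letting \(r\uparrow\pi/4\). In your version any \(\epsilon_n\to0\) works, so the bookkeeping you call the main obstacle is not actually delicate.

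The gap is the last sentence of the statement. The paper asserts, and proves, that both parameters coincide exactly on \(\mathcal G_n\) and \(\mathcal P_n\); in particular \(\chi_{\mathrm m}(\mathcal P_n)=\chi_{\mathrm m}(\mathcal G_n)\). You prove only \(\chi_{\mathrm m}(\mathcal G_n)\le\chi_{\mathrm m}(\mathcal P_n)\), plus matching lower bounds and growth rates, and you explicitly decline the equality. Projecting your specific cap covering shows only that one particular upper bound transfers, not that an optimal spherical coloring does.

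The missing step is short and uses the saturation idea you already invoke for \(\alpha_\mu(\mathcal P_n)\). Let \(I_1,\dots,I_k\) be a measurable partition of \(S^{n-1}\) into independent sets. Each saturation \(I_j\cup(-I_j)\) is still measurable and independent, since \((\pm x)\cdot(\pm y)=\pm\,x\cdot y\ne0\). Their projections are therefore \(k\) measurable independent sets covering \(M\), and assigning each point to the first one containing it gives a measurable \(k\)-coloring of \(\mathcal P_n\). Hence \(\chi_{\mathrm m}(\mathcal P_n)\le\chi_{\mathrm m}(\mathcal G_n)\), and your lifting inequality gives equality. Projecting saturated classes with unchanged weights also transfers fractional covers directly; this is how the paper treats both parameters at once.
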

\begin{proof}
The action of \(SO(n)\) on \(S^{n-1}\) is transitive, and
orthogonality is a closed relation disjoint from the diagonal.
Proposition~\ref{prop:homogeneous} therefore applies. By
Corollary~\ref{cor:double} and the opposite-cap construction,
\(\alpha_\sigma(\mathcal G_n)=\delta_n\), which proves
\eqref{eq:fractional-value}. A measurable coloring is a fractional
cover with all weights one, so it requires at least
\(\lceil\delta_n^{-1}\rceil\) colors.

Since \(c_n^{1/n}\to1\), the integral in \eqref{eq:delta} gives
\(\delta_n^{1/n}\to1/\sqrt2\): for a lower estimate integrate over
\([\pi/4-\varepsilon,\pi/4]\), then let \(\varepsilon\downarrow0\).
This proves the lower exponential bound in
\eqref{eq:chromatic-bound}. For the upper bound, fix \(0<r<\pi/4\).
Rogers' spherical covering bound \cite{RogersCovering}, also discussed
in \cite[\S1]{Prosanov}, provides a cover of \(S^{n-1}\) by
\((1/\sin r+o(1))^n\) caps of angular radius \(r\).
Each cap is independent; assigning each point to the first cap
containing it gives a Borel coloring. Hence
\(\limsup_n\chi_{\mathrm m}(\mathcal G_n)^{1/n}\le1/\sin r\).
Let \(r\uparrow\pi/4\).

Finally, replacing each independent set by its antipodal saturation
preserves measurability and independence. The projected sets give a
fractional cover of \(\mathcal P_n\) with the same weights; for an
integer coloring, make the projected cover disjoint by assigning each
point to the first set containing it. Conversely, projective covers
lift to the sphere. Both parameters therefore agree on the two graphs.
\end{proof}

The earlier independence bounds of Frankl--Wilson and Raigorodskii
\cite{FranklWilson,Raigorodskii} gave exponential lower bounds for
\(\chi_{\mathrm m}(\mathcal G_n)\) with bases \(1.13\) and \(1.225\),
respectively. Corollary~\ref{cor:colorings} matches the upper rate
\(\sqrt2\) supplied by spherical coverings. In fixed dimensions,
reciprocating and rounding the independence bounds of Bekker et al.\
\cite[Table~1]{BekkerEtAl} gives the following comparison:
\begin{center}
\begin{tabular}{@{}lrrrr@{}}
\toprule
Ambient dimension \(n\) & 5 & 6 & 7 & 8\\
\midrule
From Bekker et al. & 8 & 11 & 14 & 17\\
From Corollary~\ref{cor:colorings} & 9 & 14 & 21 & 31\\
\bottomrule
\end{tabular}
\end{center}
These are lower bounds for \(\chi_{\mathrm m}\). Sharpness of the
measure bound for a single class does not imply equality after rounding,
since the classes must also fit together in a partition.

On \(S^2\), the two measurable parameters are
\[
 \chi_{f,\mathrm m}(\mathcal G_3)=2+\sqrt2,
 \qquad \chi_{\mathrm m}(\mathcal G_3)=4.
\]
The integer value was proved by Simmons~\cite{Simmons}, whose
paper includes a Borel four-coloring due to J. Davis; see also
Godsil and Zaks~\cite{GodsilZaks}. For the fractional value, the bound
\(\alpha_3\le0.2953\) reported by Czifra et al.\ \cite{CzifraEtAl},
together with the double-cap construction and the reciprocal identity,
gave \(1/0.2953\le\chi_{f,\mathrm m}(\mathcal G_3)\le2+\sqrt2\).
Corollary~\ref{cor:colorings} identifies the upper endpoint as the
exact value.

Removing the measurability requirement from the fractional definition
gives \(\chi_f(G)\). Its finitary counterpart is
\(\chi_f^{\mathrm{fin}}(G)=\sup_{H\subset G,\ H\text{ finite}}\chi_f(H)\).
Thus
\begin{equation}\label{eq:fractional-comparison}
 \chi_f^{\mathrm{fin}}(\mathcal G_n)\le\chi_f(\mathcal G_n)
 \le\chi_{f,\mathrm m}(\mathcal G_n)=\delta_n^{-1}.
\end{equation}
The theorem determines the last quantity; equality with either of the
first two requires a further argument. For general infinite graphs,
the first inequality can be strict \cite{Leader}.
Likewise, if \(\chi(G)\) denotes the chromatic number with arbitrary
color classes, then \(\chi(\mathcal G_n)\le\chi_{\mathrm m}(\mathcal G_n)\).
For arbitrary colorings, exponential lower bounds follow from finite
configurations~\cite{RaigorodskiiSphere,Kostina}, but the measure argument
above does not determine the precise exponential growth rate of
\(\chi(\mathcal G_n)\).

\end{document}